\definecolor{slateblue}{rgb}{0,0.0,0.8}
\theoremstyle{definition}
\newtheorem{definition}{Definition}[section]
\newtheorem{example}[definition]{Example}
\theoremstyle{remark}
\newtheorem{remark}[definition]{Remark}
\newtheorem{question}[definition]{Question} 
\theoremstyle{plain}
\newtheorem{theorem}[definition]{Theorem}
\newtheorem{lemma}[definition]{Lemma}
\newtheorem{proposition}[definition]{Proposition}
\newtheorem{corollary}[definition]{Corollary}
\providecommand{\customgenericname}{}
\newcommand{\newcustomtheorem}[2]{%
	\newenvironment{#1}[1]
	{%
		\renewcommand\customgenericname{#2}%
		\renewcommand\theinnercustomgeneric{##1}%
		\innercustomgeneric
	}
	{\endinnercustomgeneric}
}
\newenvironment{myproof}[2] {\textsc{Proof of Theorem }}{\hfill$\square$}
\numberwithin{equation}{section}
\begin{document}
	
	\title{A remark on stability and the D-topology of mapping spaces}
	
	\author{Alireza Ahmadi}
	
	\address{Department of Mathematical Sciences, Yazd University, Yazd, 89195--741, Iran}
	
	\email{ahmadi@stu.yazd.ac.ir; alirezaahmadi13@yahoo.com}

	\subjclass[2020]{Primary 58A05, 58C25; Secondary 57P99}
	
	\keywords{Mapping spaces, D-topology, stability theorem, diffeological \'{e}tale manifolds}
	
	\begin{abstract}
		We discuss how stability is related to the D-topology of mapping spaces, equipped with the functional diffeology. Indeed, we show that stable classes of mapping spaces are D-open. After a reformulation of the classical stability theorem of manifolds with respect to the D-topology, we prove a version of the stability theorem in the class of diffeological \'{e}tale manifolds.
	\end{abstract}
	
	\maketitle
	
	\section{Introduction}
	The stability of mapping spaces of manifolds\footnote{By a manifold we mean  a second-countable
		Hausdorff  finite-dimensional smooth manifold.}  is an aspect of the classical differential geometry
	which concerns those properties that remain invariant under small deformations or perturbations in a smooth manner (see, e.g., \cite{GG,GP}). Let us first recall  the classical definition of a stable property.
	\begin{definition}\label{def-0}(\cite{GP})
		Let $ M $ and $ N $ be manifolds.
		A property $ \mathsf{P} $ of smooth maps is \textbf{stable} on $ \mathrm{C}^{\infty}(M,N) $  under small deformations, whenever for any smooth map $ h:M\times[0,1]\rightarrow N $, where $ [0,1] $ is a manifold with boundary, if $ h_0:M\rightarrow N, x\mapsto h(x,0) $ possesses the property $ \mathsf{P} $, then there exists a number $ \epsilon>0 $ such that the map $ h_t:M\rightarrow N, x\mapsto h(x,t) $  also possesses the property $ \mathsf{P} $, for all $ t\in[0,\epsilon) $.
	\end{definition}
	
	This article is aimed to study the stability of mapping spaces in the context of diffeology, which extends ordinary differential geometry by diffeological spaces. The reader can refer to the book \cite{PIZ} for
	a comprehensive introduction to diffeology. 
	One advantage of working in this framework, among others, is that mapping spaces have a natural structure, called the functional diffeology, for which the natural map
	\begin{center}
		$ \mathrm{C}^{\infty}(X,\mathrm{C}^{\infty}(Y,Z)) \longrightarrow\mathrm{C}^{\infty}(X\times Y,Z) $
	\end{center}
	taking $ f \mapsto \tilde{f}$ with $\tilde{f}(x,y)=f(x)(y) $, is a diffeomorphism of diffeological spaces 
	(see \cite[\S 1.60]{PIZ}). 
	This feature turns the category of diffeological spaces into a Cartesian closed one, which is very useful in smooth homotopy theory. This particularly enables us to define a coherent notion of stability in diffeology.
	\begin{definition}\label{def-1}
		Let $ \mathsf{P} $ be a property for smooth maps between diffeological spaces $ X $ and $ Y $, i.e., $ \mathrm{C}^{\infty}(X,Y) $. 
		We say that the property $ \mathsf{P} $ is \textbf{stable} on $ \mathrm{C}^{\infty}(X,Y) $  under small deformations whenever for any smooth homotopy $ h:\mathbb{R} \rightarrow \mathrm{C}^{\infty}(X,Y) $, if  $ h(0) $ possesses the property $ \mathsf{P} $, then there exists a number $ \epsilon>0 $ such that  $ h(t) $ also possesses the property $ \mathsf{P} $, for all $ t\in(-\epsilon,\epsilon) $.
	\end{definition}
	
	On the other hand, every diffeological space has an intrinsic topology, called the D-topology, in which a subset is D-open\footnote{Throughout this article, the prefix D of a topological property indicates the same property in terms of the D-topology.} if its preimage by any plot is open.
	Hence a mapping space obtain a topological structure from the functional diffeology. 
	\begin{proposition}\label{p-3} 
		Let $ X $ and $ Y $  be diffeological spaces.
		A property $ \mathsf{P} $ is stable on $ \mathrm{C}^{\infty}(X,Y) $ if and only if 
		\begin{center}
			$ \mathsf{class}(\mathsf{P})=\{ f\in\mathrm{C}^{\infty}(X,Y)\mid f $ possesses the property $ \mathsf{P} \} $
		\end{center}
		is a D-open subset of $ \mathrm{C}^{\infty}(X,Y) $, where $ \mathrm{C}^{\infty}(X,Y) $ is equipped with the functional diffeology.
	\end{proposition}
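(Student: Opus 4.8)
The plan is to prove both implications directly from the definitions, with the key observation being that the D-topology of $\mathrm{C}^{\infty}(X,Y)$ is exactly characterized by preimages under smooth homotopies $h:\mathbb{R}\to\mathrm{C}^{\infty}(X,Y)$, which are precisely the one-dimensional plots of the functional diffeology (and, via the exponential law cited in \S1.60 of \cite{PIZ}, correspond to smooth maps $\mathbb{R}\times X\to Y$). First I would recall that a subset $U\subseteq\mathrm{C}^{\infty}(X,Y)$ is D-open if and only if $p^{-1}(U)$ is open in $\mathbb{R}^n$ for every plot $p:\mathbb{R}^n\to\mathrm{C}^{\infty}(X,Y)$; then I would note the standard fact that it suffices to test this on one-dimensional plots, i.e.\ on smooth curves $h:\mathbb{R}\to\mathrm{C}^{\infty}(X,Y)$. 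Indeed, if $h^{-1}(U)$ is open for every such curve, then for an arbitrary plot $p:\mathbb{R}^n\to\mathrm{C}^{\infty}(X,Y)$ and a point $r\in p^{-1}(U)$, restricting $p$ to each affine line through $r$ shows $p^{-1}(U)$ contains a neighborhood of $r$ along every direction; a short argument (or appeal to the fact that the D-topology is determined by curves, see \cite[Ch.\ 2]{PIZ}) upgrades this to genuine openness.

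For the forward implication, suppose $\mathsf{P}$ is stable. Let $h:\mathbb{R}\to\mathrm{C}^{\infty}(X,Y)$ be a smooth curve and let $t_0\in h^{-1}(\mathsf{class}(\mathsf{P}))$, so $h(t_0)$ possesses $\mathsf{P}$. Precomposing with the translation $t\mapsto t+t_0$ gives a smooth homotopy $\tilde h$ with $\tilde h(0)=h(t_0)$ possessing $\mathsf{P}$, so by Definition~\ref{def-1} there is $\epsilon>0$ with $\tilde h(t)\in\mathsf{class}(\mathsf{P})$ for all $t\in(-\epsilon,\epsilon)$, i.e.\ $h(t)\in\mathsf{class}(\mathsf{P})$ for $t\in(t_0-\epsilon,t_0+\epsilon)$. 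Hence $h^{-1}(\mathsf{class}(\mathsf{P}))$ is open, and since this holds for every curve $h$, the set $\mathsf{class}(\mathsf{P})$ is D-open.

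For the converse, suppose $\mathsf{class}(\mathsf{P})$ is D-open and let $h:\mathbb{R}\to\mathrm{C}^{\infty}(X,Y)$ be a smooth homotopy with $h(0)$ possessing $\mathsf{P}$, i.e.\ $0\in h^{-1}(\mathsf{class}(\mathsf{P}))$. Since $h$ is smooth it is in particular D-continuous (smooth maps are continuous for the D-topologies), so $h^{-1}(\mathsf{class}(\mathsf{P}))$ is an open subset of $\mathbb{R}$ containing $0$; therefore it contains an interval $(-\epsilon,\epsilon)$, which says exactly that $h(t)$ possesses $\mathsf{P}$ for all $t\in(-\epsilon,\epsilon)$. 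This is Definition~\ref{def-1}, so $\mathsf{P}$ is stable.

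The only real subtlety—and the step I would be most careful about—is the reduction of the D-topology to one-dimensional plots in the forward direction, since Definition~\ref{def-1} only quantifies over curves $h:\mathbb{R}\to\mathrm{C}^{\infty}(X,Y)$ while D-openness a priori quantifies over all plots $\mathbb{R}^n\to\mathrm{C}^{\infty}(X,Y)$. This is where I would either invoke the known description of the D-topology as the final topology induced by all smooth curves (the "curve-testing" property of the D-topology, valid because every plot can be probed along lines and $\mathbb{R}^n$ carries the final topology of its lines), or, alternatively, rephrase the stability definition to range over all plots from the outset—but since the paper fixes Definition~\ref{def-1} with curves, the clean route is the curve-testing fact. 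Everything else is a formal unwinding of definitions together with the elementary observation that smooth maps between diffeological spaces are automatically D-continuous.
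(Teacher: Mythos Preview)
Your approach is essentially the same as the paper's: both directions hinge on the fact that the D-topology is determined by smooth curves, and the paper simply cites \cite[Theorem~3.7]{CSW2014} for this and declares the rest straightforward. Two small corrections: the curve-testing property you need is \cite[Theorem~3.7]{CSW2014}, not \cite[Ch.~2]{PIZ}; and your parenthetical suggestion of restricting a plot $p:\mathbb{R}^n\to\mathrm{C}^{\infty}(X,Y)$ to affine lines through $r$ does \emph{not} by itself yield openness of $p^{-1}(U)$ (a subset of $\mathbb{R}^n$ can contain an open segment along every line through a point without being a neighborhood of that point), so you should drop that heuristic and invoke the cited theorem directly---its proof genuinely requires the special curve lemma, as the paper's own Lemma~\ref{lem-5} illustrates.
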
 
	\begin{proof}
		As the D-topology of a diffeological space is determined via smooth paths by \cite[Theorem 3.7]{CSW2014}, the proof is straightforward.
	\end{proof}  
	
	\begin{corollary}
		It is immediate that for any stable property $ \mathsf{P} $,
		\begin{center}
			$ \mathsf{dim}(\mathsf{class}(\mathsf{P}))\leq \mathsf{dim}(\mathrm{C}^{\infty}(X,Y)) $,
		\end{center}
		where $ \mathsf{dim} $ denotes the diffeological dimension \cite[\S 1.78]{PIZ}.
	\end{corollary}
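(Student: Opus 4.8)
The plan is to read the inequality off directly from Proposition~\ref{p-3}, together with one general fact: passing to a D-open subspace cannot raise the diffeological dimension. So I would first record the following fact --- for any D-open subset $U$ of a diffeological space $Z$, endowed with the subspace diffeology, one has $\mathsf{dim}(U)\le\mathsf{dim}(Z)$ --- and then apply it. Granting it, the corollary is indeed immediate: by Proposition~\ref{p-3} the set $\mathsf{class}(\mathsf{P})$ is a D-open subset of $\mathrm{C}^{\infty}(X,Y)$, so taking $Z=\mathrm{C}^{\infty}(X,Y)$ and $U=\mathsf{class}(\mathsf{P})$ gives exactly $\mathsf{dim}(\mathsf{class}(\mathsf{P}))\le\mathsf{dim}(\mathrm{C}^{\infty}(X,Y))$.

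To prove the fact I would argue with generating families. One may assume $\mathsf{dim}(Z)=n<\infty$, there being nothing to prove otherwise, and fix a generating family $\mathcal{F}$ of the diffeology of $Z$ with $\dim(\mathcal{F})\le n$, so that each $F\in\mathcal{F}$ is a plot of $Z$ whose domain is an open subset of some $\mathbb{R}^{k}$ with $k\le n$. For such an $F$, with domain $W$, the subset $F^{-1}(U)\subseteq W$ is open --- this is simply the definition of the D-topology applied to the plot $F$ and the D-open set $U$ --- so $F|_{F^{-1}(U)}$ is a plot of the subspace diffeology on $U$. Setting $\mathcal{F}_{U}=\{\,F|_{F^{-1}(U)}: F\in\mathcal{F}\,\}$ and noting that $F^{-1}(U)$ sits in the same $\mathbb{R}^{k}$ as $W$, we get $\dim(\mathcal{F}_{U})\le\dim(\mathcal{F})\le n$.

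It then remains to check that $\mathcal{F}_{U}$ generates the subspace diffeology of $U$. Let $p\colon\mathcal{O}\to U$ be a plot of $U$, i.e.\ a plot of $Z$ with image contained in $U$. By the local-factorization description of a generated diffeology \cite[\S 1.66]{PIZ}, every point of $\mathcal{O}$ has a neighbourhood $V$ on which $p$ is constant or factors as $p|_{V}=F\circ g$ with $F\in\mathcal{F}$ and $g\colon V\to W$ smooth. Since $p(V)\subseteq U$, the map $g$ actually takes values in the open set $F^{-1}(U)$, so $p|_{V}=(F|_{F^{-1}(U)})\circ g$ factors through an element of $\mathcal{F}_{U}$; the constant case is trivial. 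Hence $\mathcal{F}_{U}$ generates, and therefore $\mathsf{dim}(U)\le\dim(\mathcal{F}_{U})\le n=\mathsf{dim}(Z)$, as claimed.

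I do not expect a genuine obstacle. The few points needing care are the exact local-factorization characterization of the diffeology generated by a family --- which is what makes the assignment $F\mapsto F|_{F^{-1}(U)}$ legitimate --- the trivial case $\mathsf{dim}(Z)=\infty$, and the openness of $F^{-1}(U)$, which as noted is nothing but D-openness of $U$. One could alternatively phrase the argument through the locality of the diffeological dimension (cf.\ \cite[\S 1.78]{PIZ}): for D-open $U\subseteq Z$, $\mathsf{dim}(U)=\sup_{f\in U}\mathsf{dim}_{f}(Z)\le\mathsf{dim}(Z)$, which is the precise sense in which the corollary is ``immediate''.
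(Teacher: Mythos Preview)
Your proposal is correct and supplies the details the paper omits: the paper gives no proof at all, simply asserting the inequality as ``immediate'' after Proposition~\ref{p-3}. Your argument---that a D-open subspace inherits a generating family of no larger dimension by restricting each generator $F$ to $F^{-1}(U)$---is exactly the natural justification, and your alternative phrasing via pointwise dimension is equally apt.
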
   
	
	To compare the classical definition of stability with the diffeological one in the class of manifolds, we need the following lemma.
	But before that, consider the space $ [0,1] $ equipped with the diffeology of manifolds with boundary (see \cite[\S 4.16]{PIZ} and \cite{GI}).
	The D-topology of $ [0,1] $ is the subspace topology inherited from $ \mathbb{R} $.
	\begin{lemma}\label{lem-5}
		Let  $ X $ be a diffeological space.
		A subset $ A $ of $ X $ is D-open if and only if for every $ h\in \mathrm{C}^{\infty}([0,1],X) $  
		with $ h(0)\in A $, there exists a number $ \epsilon>0 $ such that  $ h(t)\in A $, for all $ t\in[0,\epsilon) $.
	\end{lemma}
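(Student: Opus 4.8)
The plan is to prove the two implications separately, leaning on two facts that are already available: the remark preceding the lemma, that the D-topology of $[0,1]$ is the Euclidean subspace topology, and \cite[Theorem 3.7]{CSW2014}, that a subset of a diffeological space is D-open exactly when its preimage under every smooth curve $\mathbb{R}\to X$ is open.

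For the forward direction, I would start from a D-open set $A$ and observe that any smooth map $h\colon[0,1]\to X$ is D-continuous: if $p\colon U\to[0,1]$ is a plot, then $h\circ p\colon U\to X$ is a plot of $X$, so $p^{-1}(h^{-1}(A))=(h\circ p)^{-1}(A)$ is open, which says that $h^{-1}(A)$ is D-open in $[0,1]$, hence open in the subspace topology. If moreover $h(0)\in A$, then $0$ lies in the open set $h^{-1}(A)\subseteq[0,1]$, so $[0,\epsilon)\subseteq h^{-1}(A)$ for some $\epsilon>0$, which is the assertion.

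For the converse, I would assume the stated path condition and, using \cite[Theorem 3.7]{CSW2014}, reduce to showing that $c^{-1}(A)$ is open in $\mathbb{R}$ for every smooth curve $c\colon\mathbb{R}\to X$. Fixing $t_0\in c^{-1}(A)$, the idea is to test the hypothesis along the two half-paths emanating from $t_0$: the maps $h_{\pm}\colon[0,1]\to X$, $h_{\pm}(s)=c(t_0\pm s)$, are smooth (since $s\mapsto t_0\pm s$ is a smooth map $[0,1]\to\mathbb{R}$ for the manifold-with-boundary diffeology, and $c$ is smooth) and satisfy $h_{\pm}(0)=c(t_0)\in A$. Applying the hypothesis to $h_+$ and to $h_-$ produces $\epsilon_+,\epsilon_->0$ with $c(t_0+s)\in A$ for $s\in[0,\epsilon_+)$ and $c(t_0-s)\in A$ for $s\in[0,\epsilon_-)$, so $(t_0-\epsilon_-,\,t_0+\epsilon_+)\subseteq c^{-1}(A)$; hence $c^{-1}(A)$ is open and $A$ is D-open.

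I do not expect a serious obstacle here; the points demanding a little care are that a smooth map out of $[0,1]$ is \emph{not} literally a plot of $X$ because its domain is not open, so one must either pass through plots of $[0,1]$ or invoke D-continuity of smooth maps, and then combine this with the identification of the D-topology of $[0,1]$ with the Euclidean subspace topology; and that the smoothness of the reparametrized curves $h_{\pm}$ is to be checked against the manifold-with-boundary diffeology, which is immediate since the inclusion $[0,1]\hookrightarrow\mathbb{R}$ and translations are smooth.
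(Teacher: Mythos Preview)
Your argument is correct. The forward direction coincides with the paper's: both invoke D-continuity of smooth maps together with the identification of the D-topology of $[0,1]$ with the subspace topology.

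For the converse, your route genuinely differs from the paper's. The paper reproves the matter from scratch in the spirit of \cite[Theorem~3.7]{CSW2014}: given an arbitrary plot $P:U\to X$ and a point $r\in P^{-1}(A)$, it picks a sequence $r_n\to r$ (fast, up to subsequence), invokes the special curve lemma of \cite{KM} to thread a smooth curve $C:\mathbb{R}\to U$ through the $r_n$, restricts to $[0,1]$, and applies the hypothesis to $P\circ C|_{[0,1]}$ to conclude that $r_n\in P^{-1}(A)$ eventually. You instead treat \cite[Theorem~3.7]{CSW2014} as a black box, reducing the problem to smooth curves $c:\mathbb{R}\to X$, and then manufacture the required $[0,1]$-paths by the affine reparametrizations $h_\pm(s)=c(t_0\pm s)$. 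Your approach is shorter and avoids re-importing the special curve lemma, at the cost of relying on the full strength of Theorem~3.7; the paper's approach is more self-contained but essentially rederives that theorem's mechanism in the present setting. One small cosmetic point: the $\epsilon_\pm$ produced by the hypothesis should be tacitly taken $\leq 1$ so that $h_\pm$ is defined on $[0,\epsilon_\pm)$, but this is harmless.
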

	
	\begin{proof}
		The proof is inspired by that of \cite[Theorem 3.7]{CSW2014}:
		Since smooth maps are D-continuous, if a subset $ A $ of $ X $ is D-open and
		$ h\in \mathrm{C}^{\infty}([0,1],X) $  with $ h(0)\in A $, there exists a number $ \epsilon>0 $ such that $ h(t)\in A $, for all $ t\in[0,\epsilon) $.
		
		Suppose that $ A \subseteq X $ and for every $ h\in \mathrm{C}^{\infty}([0,1],X) $  
		with $ h(0)\in A $, there exists a number $ \epsilon>0 $ such that  $ h(t)\in A $, for all $ t\in[0,\epsilon) $. To see that $ A $ is D-open in $ X $, take any plot $ P:U\rightarrow X $. Let $ r\in P^{-1}(A) $ and $ \{r_n\} $ be a sequence which converges fast to $ r $ in $ U $, up to suitable subsequences. By the special curve lemma \cite[p. 18]{KM},
		there is a smooth map $ C:\mathbb{R}\rightarrow U $ with 
		$ C(\frac{1}{n})=r_n $ and $ C(0)=r $. Set $ c=C|_{[0,1]} $. Then $ P\circ c:[0,1]\rightarrow X $ is an element of $ \mathrm{C}^{\infty}([0,1],X) $ with $ P\circ c(0)\in A $. 
		By hypothesis, there exists a number $ \epsilon>0 $ such that  $ P\circ c(t)\in A $, for all $ t\in[0,\epsilon) $. 
		Thus, for sufficiently large $ n $ we get,
		$ P\circ c(\frac{1}{n})\in A $ and consequently,
		$ r_n=c(\frac{1}{n})\in P^{-1}(A) $. Therefore, $ P^{-1}(A) \subseteq U $ is open.
	\end{proof}  
	
	\begin{corollary}\label{cor-2}
		Let $ M $ and $ N $ be manifolds.
		A property $ \mathsf{P} $ is stable on $ \mathrm{C}^{\infty}(M,N) $ (according to Definition \ref{def-0}) if and only if
		\begin{center}
			$ \mathsf{class}(\mathsf{P})=\{ f\in\mathrm{C}^{\infty}(M,N)\mid f $ possesses the property $ \mathsf{P} \} $
		\end{center}
		is a D-open subset of $ \mathrm{C}^{\infty}(M,N) $, where $ \mathrm{C}^{\infty}(M,N) $ is equipped with the functional diffeology.
	\end{corollary}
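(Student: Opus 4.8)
The plan is to reduce the statement to Lemma \ref{lem-5} by means of the Cartesian closed structure of diffeological spaces. First I would recall that, for manifolds $M$ and $N$, the set $\mathrm{C}^{\infty}(M,N)$ is the same whether ``smooth'' is read in the classical or in the diffeological sense, that $[0,1]$ carries the diffeology of a manifold with boundary fixed before Lemma \ref{lem-5}, and that $M\times[0,1]$ carries the corresponding product diffeology; hence a classical smooth deformation $h:M\times[0,1]\rightarrow N$ is exactly a smooth map in the diffeological sense. Applying the adjunction $\mathrm{C}^{\infty}(X\times Y,Z)\cong\mathrm{C}^{\infty}(X,\mathrm{C}^{\infty}(Y,Z))$ of \cite[\S 1.60]{PIZ} with $X=[0,1]$, $Y=M$, $Z=N$, such an $h$ corresponds bijectively and naturally to a smooth path $\tilde h\in\mathrm{C}^{\infty}([0,1],\mathrm{C}^{\infty}(M,N))$ for the functional diffeology, and under this correspondence $h_t=h(\cdot,t)=\tilde h(t)$ for every $t\in[0,1]$; in particular $h_0$ possesses $\mathsf{P}$ iff $\tilde h(0)\in\mathsf{class}(\mathsf{P})$, and likewise for $h_t$.

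With this translation in hand, the condition of Definition \ref{def-0} --- that for every such $h$ with $h_0\in\mathsf{class}(\mathsf{P})$ there is $\epsilon>0$ with $h_t\in\mathsf{class}(\mathsf{P})$ for all $t\in[0,\epsilon)$ --- becomes verbatim the hypothesis of Lemma \ref{lem-5} for the subset $A=\mathsf{class}(\mathsf{P})$ of the diffeological space $X=\mathrm{C}^{\infty}(M,N)$. Lemma \ref{lem-5} then identifies this with the assertion that $\mathsf{class}(\mathsf{P})$ is D-open, which is the claim. One may alternatively observe that, for manifolds, Definitions \ref{def-0} and \ref{def-1} coincide --- again via the same exponential law, together with the fact that the D-topology is detected by smooth paths --- and then invoke Proposition \ref{p-3} directly.

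The only genuine point requiring care, and the step I expect to be the main obstacle, is the identification of smoothness notions: namely, that the manifold-with-boundary $[0,1]$ has precisely the diffeology described before Lemma \ref{lem-5} (here one cites \cite[\S 4.16]{PIZ} and \cite{GI}), that $M\times[0,1]$ carries the product diffeology, and that the functional diffeology on $\mathrm{C}^{\infty}(M,N)$ is the appropriate one to make the adjunction apply. Once this bookkeeping is settled, the rest is a purely formal consequence of the exponential law and Lemma \ref{lem-5}, requiring no further computation.
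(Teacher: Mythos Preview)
Your proposal is correct and follows exactly the route the paper intends: the paper states Corollary~\ref{cor-2} immediately after Lemma~\ref{lem-5} with no separate proof, so the argument is precisely to translate Definition~\ref{def-0} via the exponential law $\mathrm{C}^{\infty}(M\times[0,1],N)\cong\mathrm{C}^{\infty}([0,1],\mathrm{C}^{\infty}(M,N))$ into the hypothesis of Lemma~\ref{lem-5} and then apply that lemma. Your bookkeeping remarks about the diffeology on $[0,1]$ and on $M\times[0,1]$ are appropriate and are handled by the references already cited before Lemma~\ref{lem-5}.
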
 
	
	\begin{proposition} 
		In the class of manifolds, Definitions \ref{def-0} and \ref{def-1} are equivalent, meaning that a property $ \mathsf{P} $  is  stable according to Definition \ref{def-0} if and only if it is stable according to Definition \ref{def-1}.
	\end{proposition}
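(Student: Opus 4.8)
The plan is to deduce this equivalence from \Cref{cor-2} together with \Cref{p-3}, thereby reducing everything to the already-established D-topological characterizations. Since $M$ and $N$ are manifolds, \Cref{cor-2} tells us that $\mathsf{P}$ is stable according to \Cref{def-0} if and only if $\mathsf{class}(\mathsf{P})$ is a D-open subset of $\mathrm{C}^{\infty}(M,N)$ with the functional diffeology. On the other hand, \Cref{p-3} tells us that $\mathsf{P}$ is stable according to \Cref{def-1} if and only if $\mathsf{class}(\mathsf{P})$ is a D-open subset of $\mathrm{C}^{\infty}(M,N)$. The two right-hand conditions are literally the same statement, so the two notions of stability coincide.

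Concretely, I would write: Let $\mathsf{P}$ be a property of smooth maps in $\mathrm{C}^{\infty}(M,N)$. By \Cref{cor-2}, $\mathsf{P}$ is stable in the sense of \Cref{def-0} precisely when $\mathsf{class}(\mathsf{P})$ is D-open in $\mathrm{C}^{\infty}(M,N)$. By \Cref{p-3}, $\mathsf{P}$ is stable in the sense of \Cref{def-1} precisely when $\mathsf{class}(\mathsf{P})$ is D-open in $\mathrm{C}^{\infty}(M,N)$. Hence both conditions are equivalent to the D-openness of $\mathsf{class}(\mathsf{P})$, and therefore to each other. Note that \Cref{cor-2} is where the manifold hypothesis is genuinely used — it is what lets one replace the classical deformation-through-$M\times[0,1]$ formulation of \Cref{def-0} by the mapping-space formulation — whereas \Cref{p-3} holds for arbitrary diffeological spaces.

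The only subtlety worth flagging is that \Cref{def-0} uses deformations $h\colon M\times[0,1]\to N$ while \Cref{def-1} uses smooth homotopies $h\colon \mathbb{R}\to\mathrm{C}^{\infty}(X,Y)$; these are bridged implicitly in the proof of \Cref{cor-2} via the Cartesian-closedness adjunction $\mathrm{C}^{\infty}(M\times[0,1],N)\cong\mathrm{C}^{\infty}([0,1],\mathrm{C}^{\infty}(M,N))$ and \Cref{lem-5}, which characterizes D-openness through smooth curves $[0,1]\to X$. I do not expect any real obstacle here: once \Cref{cor-2} and \Cref{p-3} are in hand, the proposition is a formal consequence, and the proof is essentially a one-line observation that both notions are reformulations of the same D-openness condition.
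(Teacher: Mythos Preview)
Your argument is correct and matches the paper's own proof, which simply states that the result is a consequence of Proposition~\ref{p-3} and Corollary~\ref{cor-2}. Your additional commentary about Cartesian closedness and Lemma~\ref{lem-5} is accurate but goes beyond what is needed, since those points are already absorbed into the proof of Corollary~\ref{cor-2}.
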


	\begin{proof}
		The proof is a consequence of Proposition \ref{p-3} and Corollary \ref{cor-2}.
	\end{proof} 
	\subsection{Stability theorem}
	
	By Corollary \ref{cor-2}, the classical stability theorem of manifolds (see, e.g., \cite[p. 35]{GP})  can  be rephrased with respect to the D-topology in the following way.
	
	\begin{theorem}\label{the-stb}(Stability theorem of manifolds).
		The following classes of smooth maps from a compact manifold $ K $ to a manifold $ M $ are D-open subsets of $ \mathrm{C}^{\infty}(K,M) $, where $ \mathrm{C}^{\infty}(K,M) $ is equipped with the functional diffeology.
		\begin{enumerate}
			\item[(i)] 
			diffeomorphisms,
			\item[(ii)] 
			\'{e}tale maps (i.e., local diffeomorphisms),
			\item[(iii)] 
			submersions,
			\item[(iv)] 
			immersions,
			\item[(v)] 
			embeddings,
			\item[(vi)] 
			maps transversal to any specified closed submanifold $ N\subseteq M $.
		\end{enumerate}
	\end{theorem}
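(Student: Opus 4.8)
The plan is to obtain this theorem as an immediate consequence of Corollary~\ref{cor-2}, so that no new differential-topological work is needed: the substance is imported wholesale from the classical stability theorem. First I would recall that \cite[p.~35]{GP} asserts precisely that, for $K$ compact, each of the properties (i)--(vi) is \emph{stable} on $\mathrm{C}^{\infty}(K,M)$ in the sense of Definition~\ref{def-0} --- that is, for any smooth $h\colon K\times[0,1]\to M$ such that $h_{0}$ has the property, there is $\epsilon>0$ with $h_{t}$ having the property for all $t\in[0,\epsilon)$. It is exactly in cases (v) and (vi) that compactness of $K$ is used in those classical proofs, in order to pass from a pointwise condition to one holding uniformly on a neighbourhood of $h_{0}$ in $K$.

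Next I would fix one such property $\mathsf{P}$ and invoke the equivalence recorded in Corollary~\ref{cor-2}: stability of $\mathsf{P}$ on $\mathrm{C}^{\infty}(K,M)$ according to Definition~\ref{def-0} is the same thing as D-openness of $\mathsf{class}(\mathsf{P})$ in $\mathrm{C}^{\infty}(K,M)$ equipped with the functional diffeology. Concretely, via the exponential law \cite[\S 1.60]{PIZ} a smooth homotopy $K\times[0,1]\to M$ is the same datum as a smooth path $[0,1]\to\mathrm{C}^{\infty}(K,M)$, so the classical stability hypothesis is exactly the hypothesis of Lemma~\ref{lem-5} applied to the subset $A=\mathsf{class}(\mathsf{P})$. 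Running this for $\mathsf{P}$ equal, in turn, to ``being a diffeomorphism'', ``being \'{e}tale'', ``being a submersion'', ``being an immersion'', ``being an embedding'', and ``being transversal to $N$'' delivers all six assertions at once.

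I do not expect any genuine obstacle here: everything delicate has been quarantined into \cite[p.~35]{GP} on the one hand and into Lemma~\ref{lem-5} on the other, the latter itself resting on \cite[Theorem~3.7]{CSW2014} and the special curve lemma of \cite{KM}. The one point deserving a line of care is simply making explicit that D-openness in the mapping space is detected by smooth paths, which is what legitimizes the reduction. If instead one wanted a proof internal to diffeology --- working directly with plots $P\colon U\to\mathrm{C}^{\infty}(K,M)$ rather than citing \cite{GP} --- then the hard part would be re-establishing openness of transversality and of the embedding condition in that language, and compactness of $K$ would re-enter there to manufacture, from transversality of $h_{0}$ along $N$, a single $\epsilon$ valid simultaneously at every point of $K$. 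Since that is precisely the classical argument rewritten, I would be content to cite it.
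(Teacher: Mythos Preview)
Your approach is exactly the paper's: the theorem is stated there simply as the classical stability theorem of \cite[p.~35]{GP} rephrased via Corollary~\ref{cor-2}, with no further proof given, and your reduction through Lemma~\ref{lem-5} and the exponential law is precisely the intended mechanism. One small correction to your commentary: compactness of $K$ is used in \emph{all} of (i)--(vi) in the classical argument, not only in (v) and (vi); for instance, to show immersions or submersions are stable one still needs a finite subcover of $K\times\{0\}$ to produce a single $\epsilon$ valid at every point of $K$. This does not affect the validity of your proof, since you are citing \cite{GP} rather than reproving it.
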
  
	
	\begin{remark}
		Besides the D-topology, there are other well-known topologies on mapping spaces of manifolds such as the compact-open
		topology, the weak topology, the strong topology, and the Whitney topology. 
		In \cite[Theorem 43.1]{KM} or \cite[Section 5]{Mic}, the stability theorem is shown with respect to the Whitney topology. 
		Furthermore, in \cite[Chapter 2]{Hir}, results similar  to the stability theorem is proved with respect to the strong topology.
		As mentioned in \cite[Corollary 4.15]{CSW2014}, one can reach Theorem \ref{the-stb} by comparisons made in \cite{CSW2014}, between the D-topology and the strong topology, and also other topologies. Of course, it seems that way would be longer and more complicated than our approach. 
	\end{remark}  
	As an application, this restatement of the stability theorem can be helpful in some computations. 
	\begin{example} 
		Let  $ K $ be a compact manifold.
		In light of 
		\cite[Propositions 3.6]{CW}, also
		\cite[p. 72]{Hec} and \cite[Propositions 6.3]{HM-V}, or equivalently \cite[Corollary 4.29]{CW},
		one can compute the internal tangent spaces of the following subspaces of $ \mathrm{C}^{\infty}(K,K) $
		at $ \mathrm{id}_K $, which
		are all isomorphic to the vector space of all smooth vector fields on $ K $:
		\begin{enumerate}
			\item 
			diffeomorphisms,
			\item
			\'{e}tale maps (equivalently, submersions or immersions),
			\item  
			embeddings,
			\item 
			maps transversal to any specified closed submanifold $ N\subseteq K $.
		\end{enumerate}
	\end{example}

	It is natural to suggest such a problem for mapping spaces of diffeological spaces.
	\begin{question} (Stability problem).
		Let $ X $ and $ Y $ be  diffeological spaces. Which of the following classes and under what conditions are D-open in $ \mathrm{C}^{\infty}(X,Y) $?	
		\begin{enumerate}
			\item[$ \bullet $] 
			diffeomorphisms,
			\item[$ \bullet $] 
			(local) subductions,
			\item[$ \bullet $] 
			(local) inductions,
			\item[$ \bullet $] 
			(diffeological) submersions,
			\item[$ \bullet $] 
			(diffeological) immersions,
			\item[$ \bullet $]  
			(diffeological) \'{e}tale maps,
			\item[$ \bullet $] 
			(diffeological) embeddings,
			\item[$ \bullet $]
			etc.
		\end{enumerate}
		(See Section \ref{S2} for terminology). 
	\end{question}
	Although the stability problem remains open in general, thanks to some linear-algebraic tools 
	and additional topological conditions, we are able to generalize the stability theorem to diffeological \'{e}tale manifolds, which is the main result of this article. 
	Diffeological \'{e}tale manifolds, introduced by the author in \cite{ARA}, constitute a class of diffeological spaces  that includes the usual manifolds and  also irrational tori. We
	briefly review this kind of diffeological space in Section \ref{S2}. 
	
	\begin{theorem}\label{thm-main}
		(Stability theorem of diffeological \'{e}tale manifolds).
		Suppose that $ \mathcal{K} $ and $ \mathcal{M} $ are diffeological \'{e}tale manifolds such that $ \mathcal{K} $ is D-compact (i.e., compact with respect to the D-topology). Also, $ \mathrm{C}^{\infty}(\mathcal{K},\mathcal{M}) $ is equipped with the functional diffeology.
		\begin{enumerate}
			\item[$ \textbf{(a)} $] 
			The following classes of $ \mathrm{C}^{\infty}(\mathcal{K},\mathcal{M}) $ are D-open:
			\begin{enumerate}
				\item[$ \textbf{(a1)} $] 
				diffeological submersions,
				\item[$ \textbf{(a2)} $] 
				diffeological immersions,
				\item[$ \textbf{(a3)} $] 
				diffeological \'{e}tale maps,
				\item[$ \textbf{(a4)} $] 
				local subductions (provided that $ \mathcal{M} $ is D-Hausdorff).
			\end{enumerate}
			\item[$ \textbf{(b)} $] 
			If $ \mathcal{M} $ is a usual manifold, then
			the following classes of $ \mathrm{C}^{\infty}(\mathcal{K},\mathcal{M}) $ are D-open:
			\begin{enumerate}
				\item[$ \textbf{(b1)} $] injective diffeological immersions,
				\item[$ \textbf{(b2)} $] diffeological embeddings,
				\item[$ \textbf{(b3)} $] diffeomorphisms.
			\end{enumerate}
			
			\item[$ \textbf{(c)} $] 
			Let $ \mathsf{Inj}^{\infty}(\mathcal{K},\mathcal{M})\subseteq\mathrm{C}^{\infty}(\mathcal{K},\mathcal{M}) $ be the diffeological subspace of injective smooth maps from $ \mathcal{K} $ to $ \mathcal{M} $.
			If $ \mathcal{M} $ is D-Hausdorff,  then
			the following classes of $ \mathsf{Inj}^{\infty}(\mathcal{K},\mathcal{M}) $ are D-open subsets of $ \mathsf{Inj}^{\infty}(\mathcal{K},\mathcal{M}) $.
			\begin{enumerate}
				\item[$ \textbf{(c1)} $] diffeological embeddings,
				\item[$ \textbf{(c2)} $]  diffeomorphisms.
			\end{enumerate}
		\end{enumerate}	
	\end{theorem}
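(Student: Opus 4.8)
The plan is to reduce, via Proposition \ref{p-3} and Lemma \ref{lem-5}, to a one-parameter statement and then to transport everything to honest manifold charts, where the classical stability theorem (Theorem \ref{the-stb}) applies; D-compactness of $\mathcal{K}$ supplies the uniform $\epsilon$. Concretely, fix a class $\mathsf{P}$ from the list and a smooth path $h\in\mathrm{C}^{\infty}([0,1],\mathrm{C}^{\infty}(\mathcal{K},\mathcal{M}))$ with $f_{0}:=h(0)$ in the class; by Lemma \ref{lem-5} it is enough to produce $\epsilon>0$ with $h(t)$ in the class for all $t\in[0,\epsilon)$. By the defining property of the functional diffeology, for every plot $p:U\to\mathcal{K}$ the assignment $(u,t)\mapsto h(t)(p(u))$ is a plot $U\times[0,1]\to\mathcal{M}$; I will use this only for $p$ ranging over an \'{e}tale atlas of $\mathcal{K}$.

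The core is a localization step. Choose \'{e}tale atlases $\{\psi_{j}:V_{j}\to\mathcal{K}\}$ and $\{\phi_{i}:W_{i}\to\mathcal{M}\}$ with $V_{j}\subseteq\mathbb{R}^{\dim\mathcal{K}}$, $W_{i}\subseteq\mathbb{R}^{\dim\mathcal{M}}$ open. Given $v_{0}\in V_{j}$, pick $\phi_{i}$ and $w_{0}$ with $\phi_{i}(w_{0})=f_{0}(\psi_{j}(v_{0}))$; since \'{e}tale maps are local subductions, the plot $(v,t)\mapsto h(t)(\psi_{j}(v))$ lifts on a neighborhood $B\times[0,\delta)$ of $(v_{0},0)$ to a smooth $\widehat{H}:B\times[0,\delta)\to W_{i}$ with $\phi_{i}\circ\widehat{H}(v,t)=h(t)(\psi_{j}(v))$. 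Then $\widehat{H}$ is a genuine smooth homotopy of maps between open subsets of Euclidean space, and $\widehat{H}(\cdot,0)$ is a chart-representative of $f_{0}$. Invoking the characterization from \cite{ARA} of diffeological submersions, immersions, \'{e}tale maps, and local subductions on \'{e}tale manifolds in terms of their chart-representatives, $\widehat{H}(\cdot,0)$ is respectively a submersion, immersion, local diffeomorphism, or submersion of manifolds near $v_{0}$; applying Theorem \ref{the-stb} on a slightly smaller closed ball $\overline{B'}\subseteq B$ yields $\epsilon(v_{0})>0$ with $\widehat{H}(\cdot,t)$ keeping that property on a neighborhood of $v_{0}$ for all $t<\epsilon(v_{0})$, i.e. $h(t)$ is locally of the required type on $\psi_{j}(B')$. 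The sets $\psi_{j}(B')$ form a D-open cover of $\mathcal{K}$; by D-compactness finitely many suffice, and $\epsilon:=\min\epsilon(v_{0})$ over that finite set works. Since being a diffeological submersion, immersion, or \'{e}tale map is local on the source relative to an \'{e}tale atlas, this gives $\textbf{(a1)}$--$\textbf{(a3)}$; for $\textbf{(a4)}$ the extra D-Hausdorffness of $\mathcal{M}$ is used precisely to glue the local lifts coherently and upgrade "chart-locally a subduction" to a genuine local subduction.

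For the global classes $\textbf{(b)}$ and $\textbf{(c)}$ one must in addition preserve injectivity. If it failed, pick $t_{n}\downarrow0$ and $x_{n}\neq y_{n}$ in $\mathcal{K}$ with $h(t_{n})(x_{n})=h(t_{n})(y_{n})$; by D-compactness of $\mathcal{K}$ pass to convergent subnets $x_{n}\to x$, $y_{n}\to y$, and by D-continuity of the family together with D-Hausdorffness of $\mathcal{M}$ conclude $f_{0}(x)=f_{0}(y)$, hence $x=y$ by injectivity of $f_{0}$. But $f_{0}$ being an immersion forces, by the immersion part of Theorem \ref{the-stb} in a chart around $x$, that $h(t)$ is injective on a fixed neighborhood of $x$ for small $t$, contradicting $x_{n}\neq y_{n}$, $h(t_{n})(x_{n})=h(t_{n})(y_{n})$ for large $n$. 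Thus $h(t)$ is an injective immersion for small $t$, which gives $\textbf{(b1)}$ and the immersive case of $\textbf{(c)}$. A continuous injection from a D-compact space into a D-Hausdorff space is a homeomorphism onto its image, and combined with the immersion property this yields the induction onto the image, so $h(t)$ is a diffeological embedding, giving $\textbf{(b2)}$ and $\textbf{(c1)}$. Finally, when $h(0)$ is a diffeomorphism and $\mathcal{M}$ is a manifold, $h(t)$ for small $t$ is an injective \'{e}tale map, hence a D-open map with D-open image that is also D-compact, hence D-closed; being a nonempty D-clopen subset it exhausts each D-component it meets, so $h(t)$ is bijective \'{e}tale and therefore a diffeomorphism, giving $\textbf{(b3)}$ and $\textbf{(c2)}$.

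The main obstacle is the localization step: one must extract honest manifold homotopies that are compatible over overlapping \'{e}tale charts and uniform in the parameter, and one must know that the diffeological notions of submersion, immersion, \'{e}tale map and local subduction are detected chart-locally on \'{e}tale manifolds — this is exactly where the structure theory of \cite{ARA} is essential, and where the non-Hausdorff pathologies (as in irrational tori) would otherwise obstruct the argument. The secondary difficulty is the local-to-global passage for injectivity and for bijectivity of \'{e}tale maps, where D-compactness and D-Hausdorffness are used to forbid "collisions in the limit" and to turn open-and-closed images into the whole space.
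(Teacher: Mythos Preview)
Your approach to \textbf{(a1)}--\textbf{(a3)} is valid but differs from the paper's. You reduce to the classical stability theorem (Theorem \ref{the-stb}) applied to chart representatives $\widehat{H}(\cdot,t):\overline{B'}\to W_i$ between Euclidean domains, whereas the paper argues intrinsically: by Theorem \ref{the-2} these three classes are characterized by a full-rank condition on the internal tangent map, and the paper shows directly (via Lemma \ref{p-1} and Proposition \ref{cor-1}, assembled into Proposition \ref{lem-3}) that the rank condition persists on a product neighborhood of $(0,y_0)$ for each $y_0$, then invokes D-compactness. Your route works, but since Theorem \ref{the-stb} is itself proved by a Jacobian-rank argument, the paper's version is more direct and avoids the bookkeeping of constructing and patching the lifts $\widehat H$.

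Your treatment of \textbf{(a4)} has a genuine gap. In the paper a local subduction is a \emph{surjective} diffeological submersion, so the content of \textbf{(a4)} beyond \textbf{(a1)} is precisely that surjectivity persists. Your claim that D-Hausdorffness is used ``to glue the local lifts coherently and upgrade `chart-locally a subduction' to a genuine local subduction'' misidentifies the issue: chart-local submersiveness already yields a diffeological submersion, and no gluing is required. What is missing is an argument that $h(t)$ stays onto. The paper observes that each $h(t)$ is D-open (being a diffeological submersion) and D-closed (D-continuous from D-compact to D-Hausdorff), hence has D-clopen image; since diffeological spaces are locally connected, $h(t)$ carries components onto components, and the smooth path $s\mapsto h(st)(x)$ shows that $h(t)(x)$ lies in the same component as $h(0)(x)$, giving surjectivity. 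You deploy essentially this clopen argument later for \textbf{(b3)}, but it is already needed here, and your \textbf{(b3)} sketch likewise omits the path step that forces $h(t)(\mathcal K)$ to meet every component of $\mathcal M$.

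For \textbf{(b1)}--\textbf{(b2)} your outline matches the paper's proof closely (the paper phrases the local-injectivity step as: $(\Pr_1,\hat h)$ is a diffeological immersion into the manifold $(-\epsilon,\epsilon)\times\mathcal M$, hence an immersion by Proposition \ref{p-2}, hence locally injective). For \textbf{(c)} note that one works in the subspace $\mathsf{Inj}^\infty(\mathcal K,\mathcal M)$, so only smooth paths with injective values are relevant and no injectivity-preservation argument is needed; the paper simply writes the embedding (resp.\ diffeomorphism) class as the intersection of $\mathsf{Inj}^\infty$ with the D-open class of diffeological immersions (resp.\ local subductions), using that every D-continuous injection from a D-compact space into a D-Hausdorff space is a D-embedding.
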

	\begin{remark}
		Notice that 
		we have to consider requirements that guarantee the injectivity remains stable in the cases of diffeological embeddings and diffeomorphisms. We can take two ways to handle this difficulty: 1) to consider the restrictive condition that $ \mathcal{M} $ to be a usual manifold in item $ \textbf{(b)} $ so that diffeological immersions are locally injective (see Proposition \ref{p-2}) and follow the classical method of proof, or 2) to impose directly injectivity condition in $ \textbf{(c)} $.
		We  take into account diffeomorphisms just as a subset in $ \textbf{(b)} $ and $ \textbf{(c)} $, without any refined diffeological structure.
	\end{remark}  
	\section{Diffeological \'{e}tale manifolds}\label{S2}
	In this section, we briefly recall the needed definitions and results about diffeological \'{e}tale manifolds (see \cite{ARA} for more details).
	
	\begin{definition}
		We call a smooth map $ f:X\rightarrow Y $ between diffeological spaces a \textbf{submersion} if for each $ x_0 $ in $ X $, there exists a smooth local section $ \sigma:O\rightarrow X $ of $ f $
		passing through $ x_0 $ defined on a D-open subset $ O\subseteq Y $ such that $ f\circ\sigma(y)=y $ for all $ y\in O $.
		A smooth map $ f:X\rightarrow Y $  is said to be a \textbf{diffeological submersion} if
		the pullback  of $ f $ by every plot in $ Y $ is a submersion.
	\end{definition}
	
	\begin{proposition} 
		If $ f:X\rightarrow M $ is a diffeological submersion into a manifold $ M $, then it is a submersion.
		In particular, 	a map between manifolds is a diffeological submersion if and only if it is a submersion of manifolds.
	\end{proposition}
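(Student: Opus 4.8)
The plan is to test the diffeological-submersion hypothesis against the charts of $M$, viewed as plots, and to transport the resulting local sections back along those charts. Fix $x_0\in X$ and put $m_0=f(x_0)$. Choose a chart $\varphi\colon U\to M$ with $U\subseteq\mathbb{R}^n$ open ($n=\dim M$) and $m_0\in\varphi(U)$; then $\varphi$ is a plot of $M$ which is a diffeomorphism onto the open (hence D-open) set $\varphi(U)\subseteq M$. I would next identify the pullback of $f$ by $\varphi$: the fibered product $U\times_M X=\{(u,x):\varphi(u)=f(x)\}$, with its subspace diffeology inside $U\times X$, is diffeomorphic to $f^{-1}(\varphi(U))$ via $(u,x)\mapsto x$, with inverse $x\mapsto(\varphi^{-1}(f(x)),x)$, and under this identification the pullback projection becomes $\psi:=\varphi^{-1}\circ f|_{f^{-1}(\varphi(U))}\colon f^{-1}(\varphi(U))\to U$. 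By hypothesis $\psi$ is a submersion (in the local-section sense), so there is a D-open subset $O'\subseteq U$ --- equivalently, an ordinary open subset, since the D-topology of an open subset of $\mathbb{R}^n$ is the usual one --- containing $u_0:=\varphi^{-1}(m_0)$, together with a smooth section $\tau\colon O'\to f^{-1}(\varphi(U))$ satisfying $\tau(u_0)=x_0$ and $\psi\circ\tau=\mathrm{id}_{O'}$.

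With this in hand, set $O:=\varphi(O')$, a D-open subset of $M$ containing $m_0$, and $\sigma:=\tau\circ\varphi^{-1}|_O\colon O\to X$. Then $\sigma$ is smooth as a composite of smooth maps, $\sigma(m_0)=x_0$, and $f\circ\sigma=\varphi\circ\psi\circ\tau\circ\varphi^{-1}|_O=\mathrm{id}_O$, so $\sigma$ is a smooth local section of $f$ through $x_0$; since $x_0$ was arbitrary, $f$ is a submersion. For the ``in particular'' statement, I would combine the first part with the classical fact (implicit function theorem / rank theorem) that a smooth map between manifolds admits a smooth local section through every point of its domain precisely when it is a submersion of manifolds; this yields the forward implication. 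For the converse, suppose $f\colon M\to N$ is a submersion of manifolds and $P\colon V\to N$ is an arbitrary plot, $V\subseteq\mathbb{R}^k$ open. Since $f$ is a submersion it is transversal to $P$, so $V\times_N M$ is an embedded submanifold of $V\times M$ --- whence its pullback (subspace) diffeology coincides with the manifold diffeology --- and the projection $V\times_N M\to V$ is a submersion of manifolds, hence a submersion in our sense; therefore $f$ is a diffeological submersion.

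I expect the only delicate points to be purely bookkeeping: identifying the pullback of $f$ along the chart $\varphi$ with the explicit map $\psi=\varphi^{-1}\circ f$ on $f^{-1}(\varphi(U))$, and, in the converse, recognizing the pullback diffeology on the fibered product as the manifold diffeology of the transversal fibered product (using that embedded submanifolds carry the subspace diffeology). Everything else is a composition of smooth maps together with the elementary identifications of the D-topology of a Euclidean open set, and of a manifold, with its ordinary topology.
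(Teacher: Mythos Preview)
The paper does not actually prove this proposition: it appears in Section~\ref{S2} as one of several results recalled without proof from the companion paper \cite{ARA}, so there is no argument in the present paper to compare yours against.

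That said, your proof is correct and is the natural one. The identification of the pullback $\varphi^*X$ with $f^{-1}(\varphi(U))$ via $(u,x)\mapsto x$ is clean (here you are genuinely using that the chart $\varphi$ is injective, so that the second coordinate determines the first), and transporting the local section back through $\varphi$ is straightforward. For the converse direction your transversality argument is also fine; the only point worth making explicit in a final write-up is the one you already flag: that the pullback of a submersion of manifolds along an arbitrary smooth map is again a submersion of manifolds (surjectivity of $df_m$ lets you lift any tangent vector in $T_vV$ to $T_{(v,m)}(V\times_N M)$), and that embedded submanifolds carry the subspace diffeology so that the two candidate diffeologies on $V\times_N M$ agree.
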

	
	\begin{definition} 
		A smooth map $ f:X\rightarrow Y $ between diffeological spaces is an \textbf{immersion} if for each $ x_0 $ in $ X $, there exist a D-open neighborhood $ O\subseteq X $ of the point $ x_0 $, a D-open neighborhood $ O'\subseteq Y $ of the set $ f(O) $, and a smooth map $ \varrho:O'\rightarrow X $ such that 
		$ \varrho\circ f(x)=x $ for all $ x\in O $. 
		A smooth map $ f:X\rightarrow Y $  is a \textbf{diffeological immersion} if
		for any plot $ P:U\rightarrow Y $ in $ Y $,
		for each $ (r_0,x_0) $ in $ P^*X $, there exist a D-open neighborhood $ O $ of $ (r_0,x_0) $ in $ P^*X $, an open neighborhood $ V\subseteq U $ of $ P^*f(O) $ and a smooth map $ \varrho:V\rightarrow U\times X $ such that $ \varrho\circ P^*f(r,x)=(r,x) $ for all $ (r,x)\in O $.
		\begin{displaymath}
			\xymatrix{
				U\times X\ar@/^0.4cm/[drr]^{\Pr_2}& & \\
				&	P^*X \ar[ul]\ar[r]^{P_{\#}}\ar[d]^{P^*f} & X\ar[d]^{f} \\
				&	V\ar@/^0.6cm/@{-->}[uul]^{\varrho}\ar[r] \subseteq U \ar[r]^{P} & Y  }
		\end{displaymath}
	\end{definition}
	
	Any immersion of diffeological spaces is a local induction.
	\begin{proposition}\label{p-2} 
		If $ f:X\rightarrow M $ is a diffeological immersion into a manifold $ M $, then it is an immersion.
		In particular, 	a map between manifolds is a diffeological immersion if and only if it is an immersion of manifolds.
	\end{proposition}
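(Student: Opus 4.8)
The plan is to prove the two assertions of the proposition in turn, the guiding idea being that a manifold $M$ carries \emph{injective} plots, namely inverse charts, along which the pullback construction is transparent.

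For the first assertion I would fix $x_{0}\in X$, choose a chart $\phi\colon W\to U$ of $M$ with $f(x_{0})\in W$ (here $W\subseteq M$ open, $U\subseteq\mathbb{R}^{n}$ open), and take the plot $P:=\phi^{-1}\colon U\to M$. The key observation is that the projection $P_{\#}\colon P^{*}X\to X$, $(r,x)\mapsto x$, is a diffeomorphism onto the D-open subset $f^{-1}(W)\subseteq X$, with smooth inverse $x\mapsto(\phi(f(x)),x)$, and that under this identification $P^{*}f$ becomes the map $x\mapsto\phi(f(x))$; consequently a subset of $P^{*}X$ is D-open precisely when its $P_{\#}$-image is D-open in $X$. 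Applying the hypothesis that $f$ is a diffeological immersion to the plot $P$ at the point $(\phi(f(x_{0})),x_{0})$ produces a D-open neighbourhood $O$ of that point in $P^{*}X$, an open set $V\subseteq U$ with $P^{*}f(O)\subseteq V$, and a smooth map $\varrho=(\varrho_{1},\varrho_{2})\colon V\to U\times X$ satisfying $\varrho_{2}(r)=x$ whenever $(r,x)\in O$. I would then transport these data back through $\phi$: put $O_{X}:=P_{\#}(O)$ (a D-open neighbourhood of $x_{0}$ in $X$), $O_{M}:=\phi^{-1}(V)$ (open, hence D-open, in $M$), and $\varrho_{X}:=\varrho_{2}\circ\phi|_{O_{M}}\colon O_{M}\to X$ (smooth). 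Since every $x\in O_{X}$ satisfies $(\phi(f(x)),x)\in O$, one obtains both $f(x)\in\phi^{-1}(V)=O_{M}$, so that $O_{M}$ is a neighbourhood of $f(O_{X})$, and $\varrho_{X}(f(x))=\varrho_{2}(\phi(f(x)))=x$; this is exactly the local retraction required by the definition of an immersion.

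For the second assertion (the equivalence for maps between manifolds), the implication from diffeological immersions to immersions of manifolds is immediate from the first part: it yields near every point a smooth retraction $\varrho_{X}$ with $\varrho_{X}\circ f=\mathrm{id}$, and differentiating shows each $df_{x}$ is injective. For the converse, let $f\colon M'\to M$ be an immersion of manifolds; by the local normal form for immersions I may choose charts in which $f$ is the linear inclusion $j\colon\mathbb{R}^{k}\hookrightarrow\mathbb{R}^{k}\times\mathbb{R}^{n-k}$. This $j$ is a diffeological immersion: for any plot $P=(P_{1},P_{2})$ of $\mathbb{R}^{n}$ on an open set $U$ one has $P^{*}\mathbb{R}^{k}=\{(r,y):y=P_{1}(r),\ P_{2}(r)=0\}$ with $P^{*}j(r,y)=r$, and $r\mapsto(r,P_{1}(r))$ is a smooth (global) retraction on $U$. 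Since being a diffeological immersion is local on the source and is preserved under pre- and post-composition with diffeomorphisms --- a routine verification straight from the definition, cf.\ \cite{ARA} --- it follows that $f$ is a diffeological immersion.

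The step I expect to be the main obstacle is not conceptual but organisational: setting up the pullback diffeology on $P^{*}X$, checking that $P_{\#}$ is a genuine diffeomorphism onto $f^{-1}(W)$ (not merely a set bijection), and keeping careful track of which neighbourhoods live in $X$, in $U$, and in $M$ as one moves back and forth along the chart $\phi$. In particular the D-openness of $O_{X}$ uses both that $f^{-1}(W)$ is D-open in $X$ and that $P_{\#}$ is a diffeomorphism, and the containment $f(O_{X})\subseteq O_{M}$ has to be extracted from $P^{*}f(O)\subseteq V$. The analytic ingredients --- the local normal form for immersions and the differentiation of the retraction identity --- are classical and present no difficulty.
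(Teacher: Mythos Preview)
The paper does not actually prove this proposition: it appears in Section~\ref{S2}, which is explicitly a review of material from the author's companion paper \cite{ARA} (``see \cite{ARA} for more details''), and no proof is given here. So there is no in-paper argument to compare against.

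That said, your proof is correct and is the natural one. The key observation---that when $P=\phi^{-1}$ is an inverse chart the projection $P_{\#}$ identifies $P^{*}X$ diffeomorphically with the D-open set $f^{-1}(W)$---is exactly what makes the abstract retraction $\varrho$ from the definition of diffeological immersion descend to a genuine local retraction of $f$. The bookkeeping you flag as the potential obstacle (D-openness of $O_{X}$ in $X$, the containment $f(O_{X})\subseteq O_{M}$) is routine once that identification is in place, since $f^{-1}(W)$ is D-open and D-open subsets of D-open subspaces are D-open in the ambient space. For the converse direction in the second assertion, the reduction via the rank theorem to the linear inclusion $j\colon\mathbb{R}^{k}\hookrightarrow\mathbb{R}^{n}$ together with the explicit global retraction $r\mapsto(r,P_{1}(r))$ is precisely what is needed; the invariance of the diffeological-immersion property under local diffeomorphisms on source and target is, as you note, a straightforward check from the definition.
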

	
	\begin{definition}
		A map $ f:X\rightarrow Y $ between diffeological spaces is called a \textbf{diffeological embedding} if it is both a diffeological immersion and a D-embedding (i.e., a topological embedding with respect to the D-topology).
	\end{definition} 
	Any diffeological embedding  is an induction.
	
	\begin{definition}
		A map $ f:X\rightarrow Y $ between diffeological spaces is \textbf{\'{e}tale} if for every $ x $ in $ X $, there are D-open neighborhoods $ O\subseteq X $ and $ V\subseteq Y $ of $ x $ and $ f(x) $, respectively, such that $ f|_O:O\rightarrow O' $ is a diffeomorphism.  
		A smooth map $ f:X\rightarrow Y $ is a \textbf{diffeological \'{e}tale map} if the pullback $ P^{*}f $ by every plot $ P $ in $ X $ is  \'{e}tale. 
	\end{definition}
	
	\begin{proposition} 
		If $ f:X\rightarrow M $ is a diffeological \'{e}tale map into a manifold $ M $, then it is \'{e}tale.
		In particular, 	a map between manifolds is a diffeological \'{e}tale map if and only if it is a local diffeomorphism of manifolds.
	\end{proposition}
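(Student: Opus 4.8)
The plan is to factor the statement through two observations: a diffeological \'{e}tale map is simultaneously a diffeological submersion and a diffeological immersion, and a map into a manifold which is at once a submersion and an immersion is already \'{e}tale. The first observation, combined with the preceding proposition on diffeological submersions into a manifold and with Proposition~\ref{p-2}, shows that $f\colon X\to M$ is both a submersion and an immersion in the plain sense; the second observation then yields that $f$ is \'{e}tale. The ``in particular'' clause is obtained by specialising $X$ to a manifold.

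First I would record that every \'{e}tale map $g\colon A\to B$ of diffeological spaces is both a submersion and an immersion. Indeed, each point of $A$ has a D-open neighbourhood $O$ on which $g$ restricts to a diffeomorphism onto a D-open subset $V\subseteq B$; the inverse $(g|_O)^{-1}\colon V\to A$ is then a smooth local section of $g$ through that point, so $g$ is a submersion, and it also satisfies $(g|_O)^{-1}\circ g=\mathrm{id}_O$ with $V$ a D-open neighbourhood of $g(O)=V$, so $g$ is an immersion. Applying this to each pullback of $f$ along a plot shows that a diffeological \'{e}tale map is a diffeological submersion and a diffeological immersion, and the two cited propositions then give that $f\colon X\to M$ is a submersion and an immersion.

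The heart of the argument is the claim that a map $f\colon X\to M$ into a manifold which is both a submersion and an immersion is \'{e}tale. Fix $x_0\in X$ and set $m_0=f(x_0)$. The submersion property yields a smooth section $\sigma\colon O\to X$ with $f\circ\sigma=\mathrm{id}_O$, where $O\subseteq M$ is D-open, $m_0\in O$, and $\sigma(m_0)=x_0$; the immersion property yields a D-open neighbourhood $O_1\ni x_0$ in $X$ together with a smooth map $\varrho$ defined near $f(O_1)$ with $\varrho\circ f=\mathrm{id}$ on $O_1$, so in particular $f|_{O_1}$ is injective. Put $W=\sigma^{-1}(O_1)$; this is a D-open neighbourhood of $m_0$ in $M$, and $W\subseteq f(O_1)$ since $m=f(\sigma(m))$ for $m\in W$. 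Put $O_X=O_1\cap f^{-1}(W)$; this is D-open in $X$ because $f$ is D-continuous, and it contains $x_0$. The key point is that $O_X=\sigma(W)$: the inclusion $\sigma(W)\subseteq O_X$ is immediate from the definitions, and if conversely $x\in O_X$ then $\sigma(f(x))$ lies in $O_1$ and has the same image under $f$ as $x$, whence injectivity of $f|_{O_1}$ forces $x=\sigma(f(x))\in\sigma(W)$. It follows that $f|_{O_X}\colon O_X\to W$ is a bijection whose inverse is the smooth map $\sigma|_W$, hence a diffeomorphism, and this exhibits $f$ as \'{e}tale at $x_0$.

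For the remaining assertion, if $M'$ and $M$ are manifolds then a diffeological \'{e}tale map $M'\to M$ is \'{e}tale by what has just been proved, and for the diffeological spaces underlying manifolds ``\'{e}tale'' is nothing but ``local diffeomorphism'', since the D-topology is the manifold topology and the subspace diffeology of an open set is its manifold structure; conversely, a local diffeomorphism of manifolds is locally the identity, so its pullback along any plot is locally a diffeomorphism and hence \'{e}tale. The step I expect to be the main obstacle is the heart of the argument above, and within it the identity $O_X=\sigma(W)$ together with the D-openness of this set: a priori neither $f(O_1)\subseteq M$ nor $\sigma(O)\subseteq X$ need be D-open, and the trick is that the overlap $O_1\cap f^{-1}(W)$ is forced to coincide with $\sigma(W)$ and is D-open simply by D-continuity of $f$.
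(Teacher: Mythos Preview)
The paper itself does not supply a proof of this proposition: Section~\ref{S2} is explicitly a review section importing results from \cite{ARA}, and this statement, like the companion propositions on submersions and immersions into manifolds, is stated there without argument. So there is no in-paper proof to compare against.

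That said, your argument is correct. The reduction ``diffeological \'{e}tale $\Rightarrow$ diffeological submersion $+$ diffeological immersion'' is valid: if each pullback $P^{*}f$ is \'{e}tale, the local inverse of $P^{*}f$ simultaneously serves as the local section required for a submersion and (after composing with the inclusion $P^{*}X\hookrightarrow U\times X$) as the local retraction $\varrho$ required by the paper's definition of diffeological immersion. Your core lemma---that a smooth map into a manifold which is both a submersion and an immersion is \'{e}tale---is argued cleanly; the identity $O_X=\sigma(W)$ is exactly the point, and your verification of both inclusions using injectivity of $f|_{O_1}$ is sound. Note incidentally that you never use that $M$ is a manifold in this step: the argument shows more generally that any map of diffeological spaces which is both a submersion and an immersion is \'{e}tale.

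One small remark on the converse in the ``in particular'' clause: your sentence ``a local diffeomorphism of manifolds is locally the identity, so its pullback along any plot is locally a diffeomorphism'' is correct but compressed. Concretely, given $(r_0,x_0)\in P^{*}M'$, choose an open $V\ni x_0$ on which $f$ is a diffeomorphism onto $f(V)$; then $(U_0\times V)\cap P^{*}M'$ with $U_0=P^{-1}(f(V))$ is a D-open neighbourhood of $(r_0,x_0)$ in $P^{*}M'$, and $P^{*}f$ restricts there to a diffeomorphism onto $U_0$ with smooth inverse $r\mapsto\bigl(r,(f|_V)^{-1}(P(r))\bigr)$. This is presumably what you had in mind.
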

	
	\begin{remark}
		Any diffeological \'{e}tale map and more generally, any diffeological submersion is a D-open map. 
	\end{remark}
	
	\begin{definition} 
		A diffeological space $ \mathcal{M} $ is said to be a \textbf{diffeological \'{e}tale $ n $-manifold} if there exists a parametrized cover $ \mathfrak{A} $, called an \textbf{atlas} for $ \mathcal{M} $ consisting of diffeological \'{e}tale maps 
		from $ n $-domains into $ \mathcal{M} $.
		We call the elements of $ \mathfrak{A} $ \textbf{diffeological \'{e}tale charts}.
	\end{definition} 
	
	In this situation, $ \mathfrak{A} $ is actually a covering generating family for $ \mathcal{M} $ and the diffeological dimension of $ \mathcal{M} $ is equal to $ n $, $ \mathsf{dim}(\mathcal{M})=n $. Moreover, at each point $ x $ of $ \mathcal{M} $, the internal tangent space $ T_{x}\mathcal{M} $ is isomorphic to $ \mathbb{R}^n $.
	Therefore, for a diffeological \'{e}tale $ n $-manifold, the diffeological dimension and the dimension of the tangent spaces are the same.
	
	If $ \varphi:U\rightarrow \mathcal{M} $ are $ \psi:V\rightarrow \mathcal{M} $ are two diffeological \'{e}tale charts in $ \mathcal{M} $ with $ \varphi(r)=\psi(s) $, then we can find a unique smooth map $ h:U'\rightarrow V $ defined on an open neighborhood $ U'\subseteq U $ of $ r $ such that $ h(r)=s $ and the following diagram commutes:
	\begin{displaymath}
		\xymatrix{
			&  \mathcal{M}  & \\
			U' \ar[ur]^{\varphi|_{U'}}\ar[rr]_{h} & & V\ar[ul]_{\psi}  \\
		}
	\end{displaymath}
	Although diffeological \'{e}tale charts  $ \varphi $ are $ \psi $ may not locally injective, $ h $ is indeed an \'{e}tale map and uniquely determined by $ \varphi $ are $ \psi $. Locally, $ h $ plays the role of a transition map.
	
	\begin{example}\label{exa-1}
		Diffeological manifolds are diffeological \'{e}tale manifolds but not conversely.
		The irrational torus $ \mathbb{T}_{\alpha}=\mathbb{R}/(\mathbb{Z}+\alpha\mathbb{Z}) $,  $ \alpha\notin\mathbb{Q}  $,  is a D-compact diffeological \'{e}tale $ 1 $-manifold which is not a diffeological manifold.
		More generally,	$ \mathbb{T}^n_{\Gamma}=\mathbb{R}^n/\Gamma $, where $ \Gamma $ is a discrete subgroup of $ \mathbb{R}^n $, is a diffeological \'{e}tale $ n $-manifold. 
	\end{example}
	
	\begin{example}
		Any open subset of a diffeological \'{e}tale $ n $-manifold itself is  a diffeological \'{e}tale $ n $-manifold
		in a natural way. 
	\end{example}
	
	\begin{example}(Product \'{e}tale manifolds).
		Suppose $ \mathcal{M}_1,\dots,\mathcal{M}_k $ are diffeological \'{e}tale manifolds of dimensions $ n_1,\dots,n_k $, respectively. The product space $ \mathcal{M}_1\times\cdots\times\mathcal{M}_k $
		is a diffeological \'{e}tale manifold of dimension $ n_1+\cdots+n_k $ with
		diffeological \'{e}tale charts of the form $ \varphi_1\times\cdots\times\varphi_k $, where $ \varphi_i $ is a diffeological \'{e}tale chart of  $ \mathcal{M}_i $, $ i=1,\dots,k $.
	\end{example}
	
	Recall that the internal tangent map of a smooth map is the linear map between the internal tangent spaces induced by universal property (see \cite{CW,Hec,HM-V}).
	
	\begin{theorem}\label{the-2} 
		Let $ \mathcal{M} $ and $ \mathcal{N} $ be diffeological \'{e}tale manifolds and
		$ f:\mathcal{M}\rightarrow \mathcal{N} $ be a smooth map.
		\begin{enumerate}
			\item[(i)]
			$ f $ is a diffeological submersion if and only if the internal tangent map $ df_x:T_x\mathcal{M}\rightarrow T_{f(x)}\mathcal{N} $ is an epimorphism at each point $ x\in\mathcal{M} $.
			\item[(ii)] 
			$ f $ is a diffeological immersion
			if and only if the internal tangent map $ df_x:T_x\mathcal{M}\rightarrow T_{f(x)}\mathcal{N} $ is a monomorphism at each point $ x\in\mathcal{M} $.
			\item[(iii)] 
			$ f $ is a diffeological \'{e}tale map if and only if the internal tangent map $ df_x:T_x\mathcal{M}\rightarrow T_{f(x)}\mathcal{N} $ is an isomorphism at each point $ x\in\mathcal{M} $.
		\end{enumerate}
		
	\end{theorem}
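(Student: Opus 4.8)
The plan is to reduce each of the three equivalences to the corresponding classical fact for smooth maps between open subsets of Euclidean spaces, passing back and forth through étale charts and using the chain rule for the internal tangent functor. Fix $x\in\mathcal{M}$, set $m=\dim\mathcal{M}$ and $n=\dim\mathcal{N}$, and choose diffeological étale charts $\varphi\colon\Omega\to\mathcal{M}$ and $\psi\colon\Omega'\to\mathcal{N}$ from the atlases of $\mathcal{M}$ and $\mathcal{N}$, with $\Omega\subseteq\mathbb{R}^m$, $\Omega'\subseteq\mathbb{R}^n$, $\varphi(a)=x$, $\psi(b)=f(x)$. Since a diffeological étale map is in particular étale, hence a diffeomorphism of some D-open neighbourhood of $b$ onto a D-open neighbourhood of $f(x)$, composing this local inverse with the plot $f\circ\varphi$ gives, on some open $\Omega_0\ni a$, a unique smooth \emph{local representative} $g\colon\Omega_0\to\Omega'$ with $\psi\circ g=f\circ\varphi|_{\Omega_0}$ and $g(a)=b$; here uniqueness comes from the local injectivity of $\psi$ near $b$. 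The same device lifts an arbitrary plot of $\mathcal{M}$ or of $\mathcal{N}$ locally through a chart — this lifting property, existence together with uniqueness, is the technical engine used throughout.

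Because étale charts are local diffeomorphisms and the internal tangent space is a local invariant on D-open subsets, the maps $d\varphi_a\colon\mathbb{R}^m\to T_x\mathcal{M}$ and $d\psi_b\colon\mathbb{R}^n\to T_{f(x)}\mathcal{N}$ are isomorphisms, and applying the chain rule to $\psi\circ g=f\circ\varphi|_{\Omega_0}$ yields $df_x=d\psi_b\circ dg_a\circ(d\varphi_a)^{-1}$. Consequently $df_x$ is an epimorphism, a monomorphism, or an isomorphism exactly when the Jacobian $dg_a\colon\mathbb{R}^m\to\mathbb{R}^n$ is surjective, injective, or bijective, respectively, and the theorem is thereby pinned to classical linear algebra for local representatives.

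I would then run the three cases in parallel. For (i): if $f$ is a diffeological submersion, test against the plot $\psi$; the pullback $\psi^*f$ has a smooth local section through $(b,x)$, and lifting its $\mathcal{M}$-component through $\varphi$ produces a smooth $h$ with $\psi\circ g\circ h=\psi$ near $b$, whence $g\circ h=\mathrm{id}$ and $dg_a$ is surjective. Conversely, if every $df_x$ is an epimorphism then every local representative is a submersion of domains; given an arbitrary plot $Q\colon W\to\mathcal{N}$ and a point $(w_0,x_0)\in Q^*\mathcal{M}$, lift $Q$ to a smooth $\widetilde Q$ through a chart $\psi$ of $\mathcal{N}$ at $Q(w_0)$, pick a chart $\varphi$ of $\mathcal{M}$ at $x_0$ with local representative $g$ of $f$, take a classical local section $\sigma$ of $g$, and verify that $w\mapsto\bigl(w,\varphi(\sigma(\widetilde Q(w)))\bigr)$ is a smooth local section of $Q^*f$ through $(w_0,x_0)$; so $f$ is a diffeological submersion. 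Case (ii) is treated the same way: the forward direction again tests against $\psi$ and extracts a classical smooth retraction of $g$ by composing the given diffeological retraction with the canonical smooth map $t\mapsto(g(t),\varphi(t))$ into $\psi^*\mathcal{M}$, giving $dg_a$ injective; for the converse one takes the classical retraction $\rho_0$ of the immersion $g$ and, for a plot $P$ and a point $(r_0,x_0)\in P^*\mathcal{M}$, sets $\varrho=\bigl(\mathrm{id},\,\varphi\circ\rho_0\circ\widetilde P\bigr)$ and checks $\varrho\circ P^*f(r,x)=(r,x)$ on a neighbourhood of $(r_0,x_0)$. Finally, (iii) follows from (i) and (ii): an isomorphism is exactly an epimorphism that is also a monomorphism, so $df_x$ is an isomorphism everywhere iff $f$ is at once a diffeological submersion and a diffeological immersion, and a smooth map between diffeological étale manifolds with both properties is a D-open map that is locally an induction, hence a diffeological étale map.

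The step I expect to be the main obstacle is the converse half of (ii). The naive attempt — ``the pullback of an immersion along a plot is again an immersion'' — fails, because $P^*\mathcal{M}$ need not be a manifold: the lift $\widetilde P$ and the representative $g$ need not be transversal, so the pullback diffeology can be genuinely singular and there is no ``differential of $P^*f$'' to appeal to. The way around this is to bypass the internal structure of $P^*\mathcal{M}$ altogether and assemble the retraction $\varrho$ out of the \emph{classical} retraction of the local representative $g$ precomposed with the chart-lift of $P$, which needs no transversality hypothesis. The other recurring delicate point, underlying every step, is the uniqueness half of the lifting property of étale charts: it is what makes the local representative $g$ well defined and what licenses cancelling $\psi$ on the left in identities such as $\psi\circ g\circ h=\psi$.
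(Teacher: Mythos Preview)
The paper does not actually prove this theorem: Section~\ref{S2} only \emph{recalls} it from the companion paper \cite{ARA}, so there is no in-paper argument to compare your proposal against. That said, your overall strategy---reduce to a local representative $g$ in charts, transfer the rank condition via the chain rule, and then rebuild sections/retractions for arbitrary plots from classical ones---is exactly the right shape and is presumably close to what \cite{ARA} does.

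However, there is a genuine error in your justification of the key lifting step. You write that ``a diffeological \'{e}tale map is in particular \'{e}tale, hence a diffeomorphism of some D-open neighbourhood of $b$ onto a D-open neighbourhood of $f(x)$'', and later that ``uniqueness comes from the local injectivity of $\psi$ near $b$''. Both claims are false in general. The paper says explicitly (just after the commutative triangle following the definition of diffeological \'{e}tale manifolds) that ``diffeological \'{e}tale charts $\varphi$ and $\psi$ may not [be] locally injective'', and Example~\ref{exa-1} exhibits the irrational torus $\mathbb{T}_\alpha=\mathbb{R}/(\mathbb{Z}+\alpha\mathbb{Z})$: the quotient map $\mathbb{R}\to\mathbb{T}_\alpha$ is a diffeological \'{e}tale chart, yet $\mathbb{T}_\alpha$ carries the indiscrete D-topology, so this chart is nowhere locally injective and admits no local inverse. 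Your construction of $g$ by ``composing this local inverse with the plot $f\circ\varphi$'' therefore does not work as stated, and your cancellation ``$\psi\circ g\circ h=\psi\Rightarrow g\circ h=\mathrm{id}$'' cannot be justified by local injectivity.

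The fix is to replace local invertibility by the lifting property the paper does state: given a diffeological \'{e}tale chart $\psi:V\to\mathcal{N}$ and any plot $Q$ of $\mathcal{N}$ hitting $\psi(b)$, there is a \emph{unique} local smooth lift $\widetilde Q$ through $\psi$ with prescribed basepoint. Existence gives you the local representative $g$ (lift the plot $f\circ\varphi$ through $\psi$); uniqueness is what licenses cancelling $\psi$ on the left, since both $g\circ h$ and $\mathrm{id}$ are lifts of $\psi$ through $\psi$ sending $b$ to $b$. You already flag this uniqueness as ``the other recurring delicate point'', so you have the right instinct---you just need to invoke the correct mechanism (unique lifting of plots through diffeological \'{e}tale charts) rather than local injectivity, which is unavailable. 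Once this is corrected, the rest of your outline goes through.
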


	\begin{definition}
		A smooth map $ f:\mathcal{M}\rightarrow \mathcal{N} $ between  diffeological \'{e}tale manifolds has \textbf{internal
			rank} $ k $ at a point $ x\in \mathcal{M} $ if the rank of the internal tangent map $ df_x:T_x\mathcal{M}\rightarrow T_{f(x)}\mathcal{N} $ is equal to $ k $. Moreover,
		$ f $	has \textbf{full rank} at $ x $ if its internal
		rank is equal to $ \min\{\mathsf{dim}(\mathcal{M}),\mathsf{dim}(\mathcal{N})\} $.
	\end{definition}
	
	\begin{proposition}\label{cor-1}
		Let $ f:\mathcal{M}\rightarrow \mathcal{N} $ be a smooth map and $ x_0\in \mathcal{M} $.
		If  $ f $ has full rank $ k $ at $ x_0 $,
		then there is a D-open neighborhood $ O\subseteq \mathcal{M} $ of $ x_0 $ such that
		$ f $ has full rank $ k $ on $ O $.
	\end{proposition}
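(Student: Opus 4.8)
The plan is to push the problem into a pair of diffeological \'{e}tale charts and then invoke the classical fact that full rank is an open condition for smooth maps between domains. Set $m=\mathsf{dim}(\mathcal{M})$ and $n=\mathsf{dim}(\mathcal{N})$, so that $k=\min\{m,n\}$. Since the atlas of an \'{e}tale manifold is a covering generating family, we may choose a diffeological \'{e}tale chart $\varphi:U\rightarrow\mathcal{M}$ with $\varphi(r_0)=x_0$ and a diffeological \'{e}tale chart $\psi:V\rightarrow\mathcal{N}$ with $\psi(s_0)=f(x_0)$, where $U$ and $V$ are an $m$-domain and an $n$-domain, respectively.

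The first step is to produce a local smooth lift of $f\circ\varphi$ through $\psi$. Since $f\circ\varphi:U\rightarrow\mathcal{N}$ is a plot of $\mathcal{N}$ and $\psi$ is a diffeological \'{e}tale map, the pullback $(f\circ\varphi)^{*}\psi$ is \'{e}tale over $U$; the pair $(r_0,s_0)$ lies in its total space because $(f\circ\varphi)(r_0)=\psi(s_0)$, so near $(r_0,s_0)$ this pullback map restricts to a diffeomorphism onto a D-open subset of $U$. Inverting this restriction and projecting to $V$ yields an open neighborhood $U'\subseteq U$ of $r_0$ together with a smooth map $g:U'\rightarrow V$ satisfying $g(r_0)=s_0$ and $\psi\circ g=f\circ\varphi|_{U'}$.

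Next I would compare internal ranks. By Theorem \ref{the-2}(iii) the internal tangent maps $d\varphi_r:T_rU\rightarrow T_{\varphi(r)}\mathcal{M}$ and $d\psi_{g(r)}:T_{g(r)}V\rightarrow T_{f(\varphi(r))}\mathcal{N}$ are isomorphisms for every $r\in U'$; using the identifications $T_rU\cong\mathbb{R}^m$ and $T_{g(r)}V\cong\mathbb{R}^n$ together with functoriality of the internal tangent map applied to $\psi\circ g=f\circ\varphi|_{U'}$, we obtain $df_{\varphi(r)}\circ d\varphi_r=d\psi_{g(r)}\circ dg_r$, hence $\mathrm{rank}(df_{\varphi(r)})=\mathrm{rank}(dg_r)$ for all $r\in U'$. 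In particular $g$ has full rank $k=\min\{m,n\}$ at $r_0$, so by the classical openness of the full-rank condition for smooth maps between domains (a suitable $k\times k$ minor of the Jacobian of $g$ is nonzero at $r_0$, hence on a smaller open neighborhood $U''\subseteq U'$ of $r_0$), $g$ has rank $k$ throughout $U''$.

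Finally, put $O:=\varphi(U'')$. Since a diffeological \'{e}tale map is a D-open map, $O$ is a D-open neighborhood of $x_0=\varphi(r_0)$ in $\mathcal{M}$, and for every $x\in O$, choosing $r\in U''$ with $\varphi(r)=x$ gives $\mathrm{rank}(df_x)=\mathrm{rank}(dg_r)=k$, so $f$ has full rank $k$ on $O$, as required. I expect the only genuinely delicate point to be the construction of the lift $g$ in the first step, i.e., the assertion that a plot of an \'{e}tale manifold factors locally through any chart passing through the relevant point; this rests entirely on the \'{e}tale property of charts and the fact that the pullback of such a chart along a plot is \'{e}tale, and therefore admits local sections. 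Once $g$ is in hand, the remainder is the standard manifold argument transported verbatim along $\varphi$ and $\psi$.
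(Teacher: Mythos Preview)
Your proof is correct and follows essentially the same approach as the paper: localize via \'{e}tale charts $\varphi$ and $\psi$, factor $f\circ\varphi$ locally through $\psi$ to obtain a smooth map between Euclidean domains, and then invoke the classical openness of the full-rank condition together with the D-openness of $\varphi$. The only cosmetic difference is that the paper obtains $\psi$ and the lift $F$ simultaneously from the covering-generating-family property of the atlas, whereas you fix $\psi$ in advance and construct the lift explicitly via the pullback description of a diffeological \'{e}tale map; both routes yield the same commutative triangle.
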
 	
	\begin{proof}
		Take any diffeological \'{e}tale chart $ \varphi:U\rightarrow \mathcal{M} $ with $ \varphi(r_0)=x_0 $  for some $ r_0\in U $.
		The composition $ f\circ\varphi $ is a plot in $ \mathcal{N} $. So there are
		diffeological \'{e}tale chart $ \psi:V\rightarrow \mathcal{N} $ and a smooth map $ F:U'\rightarrow V $ defined on an open neighborhood of $ r_0\in U'\subseteq U $ such that $ f\circ\varphi|_{U'}=\psi\circ F $.
		Then
		$ df_{\varphi(r_0)}\circ d\varphi_{r_0}=d\psi_{F(r_0)}\circ dF_{r_0} $.
		Since $ d\varphi_{r_0} $ and $d\psi_{F(r_0)} $ are isomorphisms, $ F $ has full rank $ k $ at $ r_0 $.
		By \cite[Proposition 4.1]{Lee-SM}, $ r_0 $ has a neighborhood $ U''\subseteq U' $ such that $ F $ has full rank $ k $ on $ U'' $.
		Therefore, $ f $ has full rank $ k $ on $ O:=\varphi(U'') $, which is a D-open neighborhood of $ x_0 $.
	\end{proof}
	\begin{lemma}\label{p-1}
		Let $ f:\mathcal{L}\times\mathcal{N}\rightarrow \mathcal{M} $ be a smooth map between diffeological \'{e}tale manifolds.
		For each $ x\in\mathcal{L} $, define $ f^{x}:\mathcal{N}\rightarrow \mathcal{M}  $ by $ f^{x}(y)=f(x,y) $.
		Then 
		\begin{center}
			$ \mathsf{rank}~~d(\Pr_1,f)_{(x,y)}=\mathsf{dim}(\mathcal{L})+\mathsf{rank}~~d\big{(}f^{x}\big{)}_{y} $
		\end{center}
		for every $ (x,y)\in \mathcal{L}\times\mathcal{N}  $, 
		where $ \Pr_1:\mathcal{L}\times\mathcal{N}\rightarrow \mathcal{L} $ is the projection on the first factor.
	\end{lemma}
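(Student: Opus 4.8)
The plan is to reduce the statement to the classical fact about Jacobians of Euclidean maps by passing to diffeological \'{e}tale charts. Fix a point $(x,y)\in\mathcal{L}\times\mathcal{N}$ and choose diffeological \'{e}tale charts $\varphi:U\to\mathcal{L}$, $\psi:V\to\mathcal{N}$ and $\chi:W\to\mathcal{M}$ with $\varphi(r_0)=x$, $\psi(s_0)=y$ and $f(x,y)\in\chi(W)$. Since a product of diffeological \'{e}tale charts is again a diffeological \'{e}tale chart, $\Phi:=\varphi\times\psi:U\times V\to\mathcal{L}\times\mathcal{N}$ is a diffeological \'{e}tale chart, so $f\circ\Phi$ is a plot in $\mathcal{M}$; by the local lifting property of diffeological \'{e}tale charts (the transition-map mechanism recalled in Section~\ref{S2}, see also \cite{ARA}), after shrinking $U$ and $V$ there is a smooth map $G:U\times V\to W$ with $\chi\circ G=f\circ\Phi$ and $G(r_0,s_0)$ the point of $W$ lying over $f(x,y)$.

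Next I would write down the local forms of the two maps involved. Since $\Pr_1\circ\Phi=\varphi\circ\pi_1$, where $\pi_1:U\times V\to U$ is the Euclidean projection, the smooth map $(\Pr_1,f):\mathcal{L}\times\mathcal{N}\to\mathcal{L}\times\mathcal{M}$ is represented, with respect to the charts $\Phi$ and $\varphi\times\chi$, by $(\pi_1,G)\colon(r,s)\mapsto(r,G(r,s))$. Fixing $r=r_0$ gives $f^{x}\circ\psi=\chi\circ G^{r_0}$, where $G^{r_0}(s):=G(r_0,s)$. Now the internal tangent map of a diffeological \'{e}tale chart is an isomorphism at every point (Theorem~\ref{the-2}(iii) together with the fact that $T_rU\cong\mathbb{R}^{\dim U}$), and the same holds for the product charts $\Phi$ and $\varphi\times\chi$; hence, by the chain rule and functoriality of the internal tangent map,
\[
\mathsf{rank}~d(\Pr_1,f)_{(x,y)}=\mathsf{rank}~d(\pi_1,G)_{(r_0,s_0)},\qquad
\mathsf{rank}~d(f^{x})_{y}=\mathsf{rank}~d(G^{r_0})_{s_0}.
\]

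It then remains to verify the elementary identity $\mathsf{rank}~d(\pi_1,G)_{(r_0,s_0)}=\mathsf{dim}(\mathcal{L})+\mathsf{rank}~d(G^{r_0})_{s_0}$. Writing the Jacobian of $(\pi_1,G)$ at $(r_0,s_0)$ in block form as
\[
\begin{pmatrix} I & 0\\ \partial_1G & \partial_2G\end{pmatrix},
\]
with $I$ the identity matrix of size $\mathsf{dim}(\mathcal{L})$ and $\partial_iG$ the partial Jacobians of $G$ at $(r_0,s_0)$, a finite sequence of elementary row operations (subtracting $\partial_1G$ times the top block of rows from the bottom block) brings it to $\begin{pmatrix} I & 0\\ 0 & \partial_2G\end{pmatrix}$ without changing the rank; therefore the rank equals $\mathsf{dim}(\mathcal{L})+\mathsf{rank}~\partial_2G$, and $\partial_2G=d(G^{r_0})_{s_0}$. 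Combining this with the two displayed equalities above yields the lemma.

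The substantive content is thus entirely in the last block-matrix step, which is routine linear algebra; accordingly, the only point requiring care is the bookkeeping with the charts — in particular, shrinking $U$, $V$ (and $W$) compatibly so that $G$, $(\pi_1,G)$ and $G^{r_0}$ are simultaneously defined on the relevant open sets, and choosing the target chart so that its first factor is literally $\varphi$, which is exactly what makes the first block of the local form of $(\Pr_1,f)$ the identity. (Alternatively, one may argue directly through the canonical identification $T_{(x,y)}(\mathcal{L}\times\mathcal{N})\cong T_x\mathcal{L}\oplus T_y\mathcal{N}$, under which $d(\Pr_1,f)_{(x,y)}$ acquires the same block form; the chart argument above has the advantage of being self-contained within the present setup.)
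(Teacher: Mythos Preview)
Your argument is correct: reducing via product \'{e}tale charts to Euclidean local forms and then reading off the rank from the block-triangular Jacobian is exactly the natural proof, and each step (existence of the local lift $G$ with $\chi\circ G=f\circ\Phi$, invariance of rank under the isomorphisms $d\Phi$ and $d(\varphi\times\chi)$, and the elementary row reduction) is justified by the material in Section~\ref{S2}. The paper itself states Lemma~\ref{p-1} without proof, treating it as a routine computation, so there is nothing further to compare; your write-up supplies precisely the omitted details.
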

	
	\section{Proof of the stability theorem for diffeological \'{e}tale manifolds}
	
	We first need some preliminary results.
	
	\begin{lemma}\label{lem-2}  
		Let $ \mathcal{M}$ and $\mathcal{N} $   be diffeological \'{e}tale manifolds, and $\mathcal{L} $ be a usual manifold. Suppose that $ h:\mathcal{L}\rightarrow\mathrm{C}^{\infty}(\mathcal{N},\mathcal{M}) $ be a smooth map. If $ (x_0,y_0)\in\mathcal{L}\times\mathcal{N} $ and the map $ h(x_0):\mathcal{N}\rightarrow \mathcal{M}  $  has full rank $ k $ at $ y_0 $, then 
		there exist a D-open neighborhood $ O_{x_0}\subseteq\mathcal{L} $ of $ x_0 $ and a D-open neighborhood $ U_{y_0}\subseteq\mathcal{N} $ of $ y_0 $ such that for every $ x\in O_{x_0}$,
		$ h(x)|_{U_{y_0}}:U_{y_0}\rightarrow \mathcal{M}  $  has  full rank $ k $ on $ U_{y_0} $.
	\end{lemma}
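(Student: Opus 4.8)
The plan is to reduce everything to the Euclidean rank–openness already packaged in Lemma~\ref{p-1} and Proposition~\ref{cor-1}, by passing to the adjoint of $h$. By the Cartesian closedness of diffeological spaces (\cite[\S 1.60]{PIZ}), the smoothness of $h:\mathcal L\to\mathrm C^{\infty}(\mathcal N,\mathcal M)$ is equivalent to that of $\tilde h:\mathcal L\times\mathcal N\to\mathcal M$, $\tilde h(x,y)=h(x)(y)$. Now $\mathcal L\times\mathcal N$ is again a diffeological \'etale manifold (a product of \'etale manifolds), and $\tilde h^{\,x}=h(x)$ in the notation of Lemma~\ref{p-1}, so that lemma gives
\[
\mathsf{rank}\; d(\Pr_1,\tilde h)_{(x_0,y_0)}=\mathsf{dim}(\mathcal L)+\mathsf{rank}\; d\big(h(x_0)\big)_{y_0}=\mathsf{dim}(\mathcal L)+k.
\]
Since $h(x_0)$ has \emph{full} rank $k$ at $y_0$, we have $k=\min\{\mathsf{dim}(\mathcal N),\mathsf{dim}(\mathcal M)\}$, hence $\mathsf{dim}(\mathcal L)+k=\min\{\mathsf{dim}(\mathcal L\times\mathcal N),\mathsf{dim}(\mathcal L\times\mathcal M)\}$; that is, $(\Pr_1,\tilde h):\mathcal L\times\mathcal N\to\mathcal L\times\mathcal M$ has full rank at $(x_0,y_0)$. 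Proposition~\ref{cor-1} then supplies a D-open neighborhood $W\subseteq\mathcal L\times\mathcal N$ of $(x_0,y_0)$ on which $(\Pr_1,\tilde h)$ has full rank $\mathsf{dim}(\mathcal L)+k$, and applying Lemma~\ref{p-1} once more at each point of $W$ yields $\mathsf{rank}\; d\big(h(x)\big)_y=k$ for every $(x,y)\in W$.

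It remains to replace $W$ by a product neighborhood. I would pick a diffeological \'etale chart $\psi:V\to\mathcal N$ with $\psi(s_0)=y_0$ for some $s_0\in V$. The map $\mathrm{id}_{\mathcal L}\times\psi:\mathcal L\times V\to\mathcal L\times\mathcal N$ is smooth, and $\mathcal L\times V$ is an ordinary manifold whose D-topology is its usual, hence product, topology; therefore $(\mathrm{id}_{\mathcal L}\times\psi)^{-1}(W)$ is an ordinary open neighborhood of $(x_0,s_0)$ and contains a box $O_{x_0}\times V_1$ with $O_{x_0}\subseteq\mathcal L$ and $V_1\subseteq V$ open. Setting $U_{y_0}:=\psi(V_1)$ — which is D-open in $\mathcal N$ because a diffeological \'etale map is a D-open map — we get $O_{x_0}\times U_{y_0}=(\mathrm{id}_{\mathcal L}\times\psi)(O_{x_0}\times V_1)\subseteq W$. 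Consequently, for every $x\in O_{x_0}$ and every $y\in U_{y_0}$ one has $(x,y)\in W$, so $\mathsf{rank}\; d\big(h(x)\big)_y=k=\min\{\mathsf{dim}(U_{y_0}),\mathsf{dim}(\mathcal M)\}$, i.e. $h(x)|_{U_{y_0}}:U_{y_0}\to\mathcal M$ has full rank $k$ on $U_{y_0}$.

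The points I am skipping are routine: that the product of \'etale manifolds computes dimension additively and that the internal tangent map of a product splits accordingly are already in Section~\ref{S2}. The genuinely delicate step is the second paragraph, since the D-topology of $\mathcal L\times\mathcal N$ may a priori be strictly finer than the product of the D-topologies, so a D-open $W$ need not contain a box of D-open sets. The device that resolves this is to pull $W$ back along the plot $\mathrm{id}_{\mathcal L}\times\psi$ into the honest manifold $\mathcal L\times V$, where the D-topology \emph{is} the product topology and boxes exist, and then to push the box forward using that $\psi$ is a D-open map onto its image.
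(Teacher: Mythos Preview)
Your proof is correct and follows essentially the same route as the paper: pass to the adjoint $\hat h$, compute the rank of $(\Pr_1,\hat h)$ via Lemma~\ref{p-1}, apply Proposition~\ref{cor-1} to get full rank on a neighborhood, and use Lemma~\ref{p-1} once more to read off the rank of each $h(x)$. The paper simply asserts a product neighborhood $O_{x_0}\times U_{y_0}$ directly out of Proposition~\ref{cor-1} (tacitly relying on the fact that the proof of that proposition uses a chart, which in $\mathcal L\times\mathcal N$ may be taken as a product chart), whereas your second paragraph makes this step explicit by pulling $W$ back along $\mathrm{id}_{\mathcal L}\times\psi$ and pushing forward via the D-openness of $\psi$; this is a welcome clarification of a point the paper glosses over, not a genuinely different argument.
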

	\begin{proof}  
		Define
		$ \hat{h}:\mathcal{L}\times\mathcal{N}\rightarrow \mathcal{M}$ by $ \hat{h}(x,y)= h(x)(y) $.
		By Lemma \ref{p-1}, we have
		\begin{center}
			$ \mathsf{rank}~~d(\mathrm{Pr}_1,\hat{h})_{(x_0,y_0)}=\mathsf{dim}(\mathcal{L})+\mathsf{rank}~~d\big{(}h(x_0)\big{)}_{y_0}=\mathsf{dim}(\mathcal{L})+k, $
		\end{center} 
		where $ \Pr_1:\mathcal{L}\times\mathcal{N}\rightarrow \mathcal{L} $ is the projection on the first factor.
		This means that $ (\mathrm{Pr}_1,\hat{h}):\mathcal{L}\times\mathcal{N}\rightarrow \mathcal{L}\times\mathcal{M} $ has full rank $ \mathsf{dim}(\mathcal{L})+k $ at $ (x_0,y_0) $.
		By Proposition \ref{cor-1},	there exist a D-open neighborhood $ O_{x_0}\subseteq\mathcal{L} $ of $ x_0 $ and a 
		D-open neighborhood $ U_{y_0}\subseteq\mathcal{N} $ of $ y_0 $ such that
		$ (\mathrm{Pr}_1,\hat{h}) $ has full rank $ \mathsf{dim}(\mathcal{L})+k $ on $ O_{x_0}\times U_{y_0} $.
		Again by Lemma \ref{p-1}, we get
		\begin{center}
			$ \mathsf{dim}(\mathcal{L})+k=\mathsf{rank}~~d(\mathrm{Pr}_1,\hat{h})_{(x,y)}=\mathsf{dim}(\mathcal{L})+\mathsf{rank}~~d\big{(}h(x)\big{)}_{y} $
		\end{center} 
		or
		\begin{center}
			$ \mathsf{rank}~~d\big{(}h(x)\big{)}_{y}=k, $
		\end{center} 
		for all $ (x,y)\in O_{x_0}\times U_{y_0} $. Hence for every $ x\in O_{x_0}$,
		$ h(x)|_{U_{y_0}}:U_{y_0}\rightarrow \mathcal{M}  $  has full  rank $ k $ on $ U_{y_0} $.
	\end{proof}
	
	\begin{proposition}\label{lem-3}  
		Suppose that $ \mathcal{L} $ is a usual manifold,
		and $\mathcal{M}, \mathcal{K} $ are diffeological \'{e}tale manifolds such that $ \mathcal{K} $ is D-compact. Let $ h:\mathcal{L}\rightarrow\mathrm{C}^{\infty}(\mathcal{K},\mathcal{M}) $ be a smooth map. If $ x_0\in\mathcal{L} $ and the map $ h(x_0):\mathcal{K}\rightarrow \mathcal{M}  $  has full rank $ k $ on $ \mathcal{K} $, then there exists a D-open neighborhood $ O\subseteq\mathcal{L} $ of $ x_0 $ such that for every $ x\in O $,
		$ h(x):\mathcal{K}\rightarrow \mathcal{M}  $  has full rank $ k $ on $ \mathcal{K} $.
	\end{proposition}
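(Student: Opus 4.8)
The plan is to run a routine compactness argument over $\mathcal{K}$, feeding each of its points into Lemma \ref{lem-2}. First, fix an arbitrary $y\in\mathcal{K}$. Since $h(x_0)$ has full rank $k$ on $\mathcal{K}$, it in particular has full rank $k$ at $y$, so Lemma \ref{lem-2} (applied with $\mathcal{N}=\mathcal{K}$ and $y_0=y$) yields a D-open neighborhood $O_y\subseteq\mathcal{L}$ of $x_0$ and a D-open neighborhood $U_y\subseteq\mathcal{K}$ of $y$ such that for every $x\in O_y$ the restriction $h(x)|_{U_y}:U_y\rightarrow\mathcal{M}$ has full rank $k$ on $U_y$.

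Next, I would invoke D-compactness. The family $\{U_y\}_{y\in\mathcal{K}}$ is a D-open cover of $\mathcal{K}$, so there are finitely many points $y_1,\dots,y_m\in\mathcal{K}$ with $\mathcal{K}=\bigcup_{i=1}^{m}U_{y_i}$. Set $O:=\bigcap_{i=1}^{m}O_{y_i}$; as a finite intersection of D-open subsets of $\mathcal{L}$ it is D-open, and it contains $x_0$. Finally, to check the conclusion pointwise, let $x\in O$ and $y\in\mathcal{K}$ be arbitrary. Pick $i$ with $y\in U_{y_i}$; since $x\in O\subseteq O_{y_i}$, the map $h(x)|_{U_{y_i}}$ has full rank $k$ on $U_{y_i}$, hence in particular at $y$. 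As $y$ ranges over all of $\mathcal{K}$, this says $h(x):\mathcal{K}\rightarrow\mathcal{M}$ has full rank $k$ on $\mathcal{K}$, which is exactly the assertion.

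I do not expect a genuine obstacle here: the analytic content is already encapsulated in Lemma \ref{lem-2}, and the remaining work is the standard local-to-global passage, which is precisely what D-compactness supplies. The only points deserving a moment of care are that finite intersections of D-open sets are D-open (immediate from the definition of the D-topology, since the preimage of a finite intersection under any plot is a finite intersection of open sets), and that ``having full rank $k$ on $\mathcal{K}$'' is a pointwise condition, so that covering $\mathcal{K}$ by the $U_{y_i}$ indeed certifies full rank everywhere.
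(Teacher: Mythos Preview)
Your argument is correct and matches the paper's proof essentially line for line: apply Lemma~\ref{lem-2} at each point of $\mathcal{K}$, extract a finite subcover by D-compactness, and intersect the corresponding neighborhoods in $\mathcal{L}$. The only differences are cosmetic (your notation $O_y$ versus the paper's $O_{x_0,y}$, and your slightly more explicit final verification).
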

	\begin{proof}  
		By Lemma \ref{lem-2}, for each $ y\in \mathcal{K} $, there exist a D-open neighborhood $ O_{x_0,y}\subseteq\mathcal{L} $ of $ x_0 $ and a D-open neighborhood $ U_{y}\subseteq\mathcal{K} $ of $ y $ such that for every $ x\in O_{x_0,y}$,
		$ h(x)|_{U_{y}}:U_{y}\rightarrow \mathcal{M}  $  has full rank $ k $ on $ U_{y} $.
		Since the collection $ \{U_y\}_{y\in\mathcal{K}} $ covers $ \mathcal{K} $ and $ \mathcal{K}  $ is D-compact, we conclude that there are finitely many points $ y_1,\dots,y_n $ in $ \mathcal{K}  $ for which $ \bigcup_{i=1}^nU_{y_i}=\mathcal{K} $.
		Set $ O=\bigcap_{i=1}^n O_{x_0,y_i}$, which is a D-open neighborhood of $ x_0 $ in $ \mathcal{L} $.
		Therefore, 
		$ h(x):\mathcal{K}\rightarrow \mathcal{M}  $  has full rank $ k $ on $ \mathcal{K} $, for every $ x\in O $.
	\end{proof} 
	
	\begin{myproof}.  
		
		\ref{thm-main} \textbf{(a)}.
		
		In light of Theorem \ref{the-2}, parts $ \textbf{(a1)},\textbf{(a2)} $ and $ \textbf{(a3)} $ are obtained as especial cases of Proposition \ref{lem-3}, where $ \mathcal{L}=\mathbb{R} $, $ x_0=0 $, and $ k$ is taken equal to $\mathsf{dim}(\mathcal{M}) $, $ \mathsf{dim}(\mathcal{K}) $, and $ \mathsf{dim}(\mathcal{K})=\mathsf{dim}(\mathcal{M}) $, respectively.
		
		$ \textbf{(a4)} $
		Let $ h:\mathbb{R}\rightarrow \mathrm{C}^{\infty}(\mathcal{K},\mathcal{M}) $ be a smooth homotopy such that $ h(0) $ is a 		local subduction. That is, $ h(0) $ is a surjective diffeological submersion.
		By part $ \textbf{(a1)} $, one can find a positive number $ \epsilon >0 $ such that  
		$ h(t):\mathcal{K}\rightarrow\mathcal{M} $ is a diffeological submersion, for all $ t\in (-\epsilon,\epsilon) $.
		In particular, each $ h(t) $ is a D-open map.
		By \cite[Lemma 4.50(a)]{Lee-TM}, each $ h(t) $ is a D-closed map, too.
		Due to the fact that a diffeological space is locally connected by \cite{PIZ0}, its connected components are both D-open and D-closed.
		Thus we conclude that, if $ C\subseteq \mathcal{K} $ is a connected component, so is $ h(t)(C)\subseteq \mathcal{M} $, for all $ t\in (-\epsilon,\epsilon) $.
		Now fix $ t\in (-\epsilon,\epsilon) $ and take any $ y\in \mathcal{M} $. 
		By surjectivity of $ h(0) $, one  can find a point $ x\in\mathcal{K} $ with $ h(0)(x)=y $.
		But by Cartesian closedness, the map $ \gamma_x:\mathbb{R}\rightarrow\mathcal{M}$ defined by $ \gamma_x(s)=h(st)(x) $ is a smooth path connecting $ h(0)(x)=y $ and $ h(t)(x) $. 
		Thus,
		$ h(0)(x)=y $ and $ h(t)(x) $ belong to the same component. Actually, $ h(t) $ maps the component of $ x $ onto the component of $ y $.
		Therefore, $ h(t) $ is surjective and so a local subduction,
		for all $ t\in (-\epsilon,\epsilon) $.
	\end{myproof}
	\\
	\\
	\begin{myproof}.  
		
		\ref{thm-main} \textbf{(b)}.
		
		\textbf{(b1)}
		Let $ h:\mathbb{R}\rightarrow \mathrm{C}^{\infty}(\mathcal{K},\mathcal{M}) $ be a smooth homotopy such that $ h(0) $ is an injective diffeological immersion. 
		By part $ \textbf{(a2)} $, we can find a positive number $ \epsilon >0 $ such that  
		$ h(t):\mathcal{K}\rightarrow \mathcal{M} $ is a diffeological immersion, for all $ t\in (-\epsilon,\epsilon) $.
		Thus, we only need to show that there exists a positive number $ 0<\delta<\epsilon $ such that $ h(t) $ is injective, for all $ t\in (-\delta,\delta) $.
		Otherwise, for every positive integer $ n $, there are $ t_n\in\mathbb{R} $ and distinct points $ x_n,y_n\in\mathcal{K} $ such that $ |t_n|<\dfrac{1}{n} $ and $ h(t_n)(x_n)=h(t_n)(y_n) $.
		Since $ \mathcal{K} $ is compact, up to suitable subsequences, we get
		$ \displaystyle\lim_{n\rightarrow\infty} x_n= x_0 $ and $ \displaystyle\lim_{n\rightarrow\infty} y_n= y_0 $,
		for some $ x_0,y_0\in\mathcal{K} $. Then
		\begin{center}
			$ h(0)(x_0)=\displaystyle\lim_{n\rightarrow\infty} h(t_n)(x_n)=\displaystyle\lim_{n\rightarrow\infty} h(t_n)(y_n)= h(0)(y_0). $
		\end{center}
		Injectivity of $ h(0) $ implies that $ x_0=y_0 $.
		On the other hand, by Lemma \ref{p-1} and Theorem \ref{the-2}, we observe that
		\begin{center}
			$ (\Pr_1,\hat{h}):(-\epsilon,\epsilon)\times\mathcal{K}\rightarrow (-\epsilon,\epsilon)\times \mathcal{M},\quad (t,x)\mapsto \big{(}t,h(t)(x)\big{)} $
		\end{center}
		is a diffeological immersion. Because $ \mathcal{M} $ is a usual manifold, by Proposition \ref{p-2}, $ (\Pr_1,\hat{h}) $ is actually an immersion of diffeological spaces and therefore, locally injective.
		In particular, there exists a neighborhood $ (-\epsilon',\epsilon')\times O\subseteq (-\epsilon,\epsilon)\times\mathcal{K} $ of $ (0,x_0) $ such that
		$ (\Pr_1,\hat{h})|_{(-\epsilon',\epsilon')\times O} $
		is injective. Due to convergence, for some sufficiently large $ m $, we have 
		$ (t_m,x_m),(t_m,y_m)\in (-\epsilon',\epsilon')\times O $.
		Now from injectivity $ (\Pr_1,\hat{h})|_{(-\epsilon',\epsilon')\times O} $ and the equality
		$ \big{(}t_m,h(t_m)(x_m)\big{)}=\big{(}t_m,h(t_m)(y_m)\big{)} $
		we get
		$ x_m=y_m $ for some $ m $, which is a contradiction with our hypothesis that $ x_m$ and $y_m $ are distinct points.
		
		\textbf{(b2)}
		Let $ h:\mathbb{R}\rightarrow \mathrm{C}^{\infty}(\mathcal{K},\mathcal{M}) $ be a smooth homotopy such that $ h(0) $ is a diffeological embedding. So $ h(0) $ is an injective diffeological immersion. 
		By part $ \textbf{(b1)} $, there exists a positive number $ \epsilon >0 $ such that  
		$ h(t):\mathcal{K}\rightarrow \mathcal{M} $ is an injective diffeological immersion, for all $ t\in (-\epsilon,\epsilon) $.
		Now, in view of \cite[Lemma 4.50(c)]{Lee-TM}, the result is obtained.
		
		\textbf{(b3)}
		It is sufficient to consider the class of diffeomorphisms from $ \mathcal{K} $ onto $ \mathcal{M} $ as the intersection of
		the class of local subductions from $ \mathcal{K} $ onto $ \mathcal{M} $ with
		the class of injective diffeological immersions from $ \mathcal{K} $ to $ \mathcal{M} $.
	\end{myproof}
	\\
	\\
	\begin{myproof}.  
		
		\ref{thm-main} \textbf{(c)}.

		$ \textbf{(c1)} $ 
		By \cite[Lemma 4.50(c)]{Lee-TM}, all elements of $ \mathsf{Inj}^{\infty}(\mathcal{K},\mathcal{M}) $ are D-embeddings.
		So the class of diffeological embeddings from $ \mathcal{K} $ to $ \mathcal{M} $ is equal to the intersection of $ \mathsf{Inj}^{\infty}(\mathcal{K},\mathcal{M}) $ with the class of diffeological immersions from $ \mathcal{K} $ to $ \mathcal{M} $.
		Because the subspace topology is coarser than the D-topology of the subspace diffeology, the result is achieved by $ \textbf{(a2)} $.

		$ \textbf{(c2)} $
		Likewise, the class of diffeomorphisms from $ \mathcal{K} $ onto $ \mathcal{M} $ is equal to the intersection of $ \mathsf{Inj}^{\infty}(\mathcal{K},\mathcal{M}) $ with 
		the class of local subductions from $ \mathcal{K} $ onto $ \mathcal{M} $, which is a D-open subset of $ \mathsf{Inj}^{\infty}(\mathcal{K},\mathcal{M}) $ by $ \textbf{(a4)} $. 
	\end{myproof}

	\section*{Acknowledgement} The author would like to thank Jean-Pierre Magnot and Jordan Watts, the organizers of Special Session on Recent Advances in Diffeology and their Applications, AMS-EMS-SMF  Joint International Meeting, Grenoble, France, 2022. He also expresses his thanks to Patrick Iglesias-Zemmour,  Katsuhiko Kuribayashi, and Enxin Wu.
	\bibliographystyle{amsalpha}

\end{document}